\newcommand{\ra}{\longrightarrow}
\newcommand{\Fr}{{\rm Fr}}
\newcommand{\Tr}{\mathrm{Tr}}
\newcommand{\Gal}{{\rm Gal}}
\newcommand{\mt}{\mathcal}
\newcommand{\nsp}{{\rm N}_{\rm Spin}}
\newcommand{\disc}{{\rm disc}}
\newcommand{\phit}{\tilde{\varphi}}
\newcommand{\Aa}{\mathbf{A}}
\newcommand{\C}{\mathbf{C}}
\newcommand{\A}{\mathbf{A}}
\newcommand{\Z}{\mathbf{Z}}
\newcommand{\PP}{\mathbb{P}}
\newcommand{\R}{\mathbf{R}}
\newcommand{\Q}{\mathbf{Q}}
\newcommand{\F}{\mathbf{F}}
\newcommand{\G}{\mathbf{G}}
\newcommand{\eps}{\varepsilon}
\renewcommand{\epsilon}{\varepsilon}
\theoremstyle{plain}
\newtheorem{theorem}{Theorem}
\newtheorem{lemme}[theorem]{Lemma}
\newtheorem{corollaire}[theorem]{Corollary}
\newtheorem{proposition}[theorem]{Proposition}
\theoremstyle{remark}
\newtheorem*{remark}{Remark}
\newtheorem*{remarks}{Remarks}
\theoremstyle{definition}
\newtheorem{definition}[theorem]{Definition}
\begin{document}

%%%%%%%%%%%%%%%%%%%%%%%%%%%%%%%%%%%%%%%%%%%%%%%%%%%%%%%%%%%%%%%%%%%%%%%%%%%%%%%%%%%%%%%%%%%%%%%%%%%%%%%%%%%%%
%%%%%%%%%%%%%%%%%%%%%%%%%%%%%%%%%%%%%%%%%%%%%%%%%%%%%%%%%%%%%%%%%%%%%%%%%%%%%%%%%%%%%%%%%%%%%%%%%%%%%%%%%%%%%%%%

\title[$L$-functions of elliptic curves over function fields]{Maximal Galois group of\\ $L$-functions of elliptic curves} 

%%%%%%%%%%%%%%%%%%%%%%%%%%%%%%%%%%%%%%%%%%%%%%%%%%%%%%%%%%%%%%%%%%%%%%%%%%%%%%%%%%%%%%%%%%%%%%%%%%%%%%%%%%%%%
%%%%%%%%%%%%%%%%%%%%%%%%%%%%%%%%%%%%%%%%%%%%%%%%%%%%%%%%%%%%%%%%%%%%%%%%%%%%%%%%%%%%%%%%%%%%%%%%%%%%%%%%%%%%%%%%

\author{F. Jouve}
\address{Dept. of Mathematics\\
The University of Texas at Austin\\
1 University Station C1200\\
Austin, TX, 78712\\
USA.}
\email{jouve@math.utexas.edu}

\subjclass[2000]{11N36, 11G25 (Primary); 11E08, 14D10, 11C08 (Secondary)}
\keywords{$L$-functions of varieties over function fields, monodromy of $\ell$-adic sheaves, large sieve, polynomials and orthogonal matrices over finite fields}

\begin{abstract}
We give a quantitative version of a result due to N. Katz about $L$-functions of elliptic curves over function fields over finite fields. Roughly speaking, Katz's Theorem states that, on average over a suitably chosen algebraic family, the $L$-function of an elliptic curve over a function field becomes ``as irreducible as possible'' when seen as a polynomial with rational coefficients, as the cardinality of the field of constants grows. A quantitative refinement is obtained as a corollary of our main result which gives an estimate for the proportion of elliptic curves studied whose $L$-functions have ``maximal'' Galois group . To do so we make use of E. Kowalski's idea to apply large sieve methods in algebro-geometric contexts. Besides large sieve techniques, we use results of C. Hall on finite orthogonal monodromy and previous work of the author on orthogonal groups over finite fields.
\end{abstract}

\maketitle

\section*{Introduction}
 %In~\cite{KoCrelle} (see also~\cite{KoIMRN}), Kowalski combines arguments of Chavdarov (see~\cite{Ch}) and large sieve techniques to get a quantitative version of Chavdarov's Theorem. The result asserts that if we vary a curve $C/\F_{q^n}$ among a suitably chosen algebraic family, the proportion of curves whose zeta function has a numerator with maximal Galois group over $\Q$ approaches $1$ as $n$ tends to infinity. An instance of Kowalski's refinement is~\cite[Th. 8.15]{KoLS} (see also~\cite[Remark 8.16]{KoLS}): if $f\in\F_q[x]$ (with $q$ assumed to be coprime to $2$) is a polynomial of degree $2g\geqslant 2$ and $U$ is the complement in $\A^1$ of the zeros of $f$, then we may consider the cover $C\ra U$ whose fibers are hyperelliptic curves with genus $g$ and where the affine part of the curve $C_u$ over $u$ has equation $y^2=f(x)(x-u)$. Let $P_u$ be the numerator of the zeta function of $C_u$ and let $N(f,q)$ be the number of $u\in U(\F_q)$ such that $P_u$ is reducible or has ``small'' Galois group over $\Q$. Then we have
% $$
% N(f,q)\leqslant C g^2q^{1-\gamma}\log q\,,
% $$
% with an absolute constant $C$ and where we can choose $\gamma^{-1}=4g^2+2g+4$. Note that estimate is uniform with respect to the common genus of the fibers $g$.
% \par\medskip
 
% We can ask the same question of ``generic maximality of the Galois group'' in other algebro-geometric contexts. For instance, 

 In~\cite{Ka}, Katz studies the irreducibility of the $L$-function of an elliptic curve over a function field in one variable over $\F_q$, when varying the curve among the elements of an algebraic family. A very down-to-earth instance of Katz's result goes as follows (see~\cite[Section 2]{Ka}): let $Twist_d$ be the subspace of $\A^d_{\F_q}$ (where $q$ is still assumed to be odd) consisting of monic polynomials of degree $d\geqslant 3$ for which $f(0)f(1)\disc(f)$ is invertible. For an $\F_q$-rational point $f\in Twist_d(\F_q)$, we consider the affine curve $U$ which is the complement in $\A^1_{\F_q}$ of $0,1$ and the zeros of $f$. We have the family of twisted Legendre elliptic curves $(E_f)$ over $U$ with affine part:
 $$
 y^2=f(\lambda)x(x-1)(x-\lambda)\,.
 $$
 
  From a fundamental Theorem of Grothendieck we know that the $L$-function $L(E_f/\F_q(t);T)$ attached to $E_f/\F_q(t)$ is apolynomial in $T$ with coefficients in $\Z$. In the above setting, Katz's result asserts that as $q$ grows the proportion of $f$ in $Twist_d(\F_q)$ such that the reduced $L$-function of $E_f$ (which is the quotient of $L(E_f/\F_q(t);T)$ by ``trivial factors'' we will make precise in Section~\ref{quant-katz}) is a $\Q$-irreducible polynomial, tends to $1$. That kind of question, of course, is linked to the function field version of the Birch-Swinnerton-Dyer conjecture. Indeed getting information on the irreducibility of the $L$-function of a given elliptic curve over $\F_q(t)$ tells us, a fortiori, about the order of vanishing of that function at $1$.
 \par
 
 In this paper we give a quantitative refinement of Katz's result. There are essentially two types of ingredients that come into play in our proof. First, for an odd prime $\ell$ invertible in the base field, the $\ell$-torsion of the elliptic curves we consider gives rise to a family of $\F_\ell$-sheaves, the monodromy of which is controlled by results of C. Hall~\cite{H}. From loc. cit., we know that the finite monodromy groups arising are big subgroups of certain orthogonal groups and that imposes severe restrictions on the representation theory of those groups. The second main feature of our work consists in local computations for sets of matrices or polynomials over finite fields. Here we make a crucial use of results of~\cite{J}. In both cases one of the main issues comes from the lack of good topological properties of the orthogonal group (seen as an algebraic group over the rationals). As a matter of comparison, Kowalski in~\cite{KoCrelle} establishes a similar quantitative refinement for a result of Chavdarov (see~\cite{Ch}). However these authors work in an algebro-geometric framework where $\F_\ell$-sheaves with symplectic monodromy naturally arise. Therefore most of the main arguments used by Kowalski in~\cite{KoCrelle} or Chavdarov in~\cite{Ch} cannot be applied or even adapted to our case because orthogonal groups do not share some nice properties that hold for the symplectic group (${\bf O}(N)$ is not connected ans ${\bf SO}(N)$ is not simply connected). 
 \par
 Our last task is then to put things together in a large sieve framework we will make precise.

%  In order to do so, we need first to detect the differences between the question that Chavdarov studies in~\cite{Ch} and the one that Katz answers in~\cite{Ka}. We may then adapt the additional arguments of Kowalski to Katz's situation. In so doing, we point out that a crucial difference between the two questions is of a geometric nature. Indeed, contrary to the case studied by Chavdarov and Kowalski where the finite monodromy groups that naturally appear are symplectic groups, the question of the irreducibility of $L$-functions of elliptic curves over function fields over $\F_q$ involves orthogonal groups. This distinction leads to many nontrivial complications. The point is that the orthogonal group (as an algebraic group over $\F_q$) does not share the same properties as the symplectic group (for instance the orthogonal group is neither a connected nor a simply connected linear algebraic group).
 \par
 \medskip
  The paper is divided in the following sections: first, following~\cite{H}, we give the precise context and recall the results we need to state a sample of our main Theorem. In Section~\ref{ls} we set up the large sieve framework that enables us to gather our different pieces of data. In that section we state a large sieve inequality which is a crucial tool to obtain the kind of quantitative information we want. Finally in Section~\ref{main-th}, we recall and make a few results from~\cite{J} more precise, so that putting things together yields a proof of our main result. We conclude by discussing uniformity issues in our result and we give a uniform estimate (in a sense we will make precise) in the case where $E$ is a Legendre curve.
 
 \par
\medskip
\textbf{Notation.}
 As usual the cardinality of a finite set $X$ will be denoted $|X|$. If $G$ is a group and $S$ is a conjugacy invariant subset of $G$, we will denote by $S^\sharp$ the set of conjugacy classes of elements of $S$ under $G$. If $G$ is an abelian group, $\hat{G}$ will denote its group of characters.
 \par
 The biggest integer smaller than a real number $x$ will be denoted $\lfloor x\rfloor$. If $p$ is a prime number and $n$ is an integer, $v_p(n)$ will denote the $p$-adic valuation of $n$. Similarly, if $m(t)$ (resp. $P(t)$) is a polynomial (resp. an irreducible polynomial) in $k[t]$ (for some field $k$) then we will denote by ${\rm ord}_P(m)$ the exponent of $P(t)$ appearing in the decomposition of $m(t)$ as a product of irreducible polynomials.
 \par
  We will make use of the Vinogradov notation $f\ll g$ or of the Landau notation $f=O(g)$ when there exists a strictly positive constant $C$ such that $|f(x)|\leqslant C g(x)$ for any $x$ in a common subset of the domains of the functions $f$ and $g$. If $\mt{P}_\kappa(S)$ is a property that a set $S$ may satisfy depending on the value taken by a real parameter $\kappa$, we will write ``$\mt{P}_\kappa(S)$ holds for $\kappa\geqslant\kappa_0(S)$'' if the property $\mt{P}_\kappa(S)$ is true for a large enough value of $\kappa$ that depends only on the choice of $S$. 
  \par
  For two $n$-tuples $(x_1,\ldots,x_n)$ and $(y_1,\ldots,y_n)$ we will use the Kronecker symbol $$\delta\bigl((x_1,\ldots,x_n),(y_1,\ldots,y_n)\bigr)\,,$$ which equals $1$ if $x_i=y_i$ for all $1\leqslant i\leqslant n$ and $0$ otherwise.
  \par
  Finally we will denote by $\disc(f)$ the discriminant of a polynomial $f$ (in the sense of e.g.~\cite[p. 204]{LaA}) and by $\disc(Q)$ the discriminant of a quadratic form $Q$ (i.e. the determinant of a Gram matrix for $Q$).

 \par
\medskip
\textbf{Acknowledgements.} The author would like to thank E. Kowalski for introducing him to his deep generalization of the large sieve as well as C. Hall and F. Rodriguez-Villegas for useful discussions.

 \section{A quantitative version of Katz's Theorem} \label{quant-katz}
 
  For $q$ a fixed power of a fixed odd prime number $p$, let $E/\F_q(t)$ be an elliptic curve over the rational function field $K=\F_q(t)$ with non constant $j$-invariant. The $L$-function of $E/K$ is well-known to be equal to a reversed characteristic polynomial in the variable $T$:
  $$
  \det(1-qTA)\,,
  $$
  where $A\in O(N,\R)$, the orthogonal group for a real quadratic space of dimension $N$. As mentioned in the introduction we know that such a polynomial has integral coefficients. Following Katz (see~\cite{Ka}), we prefer studying the \emph{unitarized} $L$-function of $E/K$:
  $$
  L_u(E/K;T)=L(E/K;T/q)=\det(1-TA)\,,
  $$
  which is a polynomial with coefficients in $\Z[1/q]$.
  
\par
 The question of the irreducibility of $L_u(E/K;T)$ seen as a $\Q$-polynomial is only relevant if there is no a priori imposed root for that polynomial. But, as $L_u(E/K;T)$ coincides with the (reversed) characteristic polynomial of an orthogonal matrix $A$ with size $N\times N$, it satisfies the well known functional equation
 \begin{equation}\label{eq_fonc}
\det(1-TA)=T^N\det(-A)\det(1-T^{-1}A)\,. 
 \end{equation}

 As a consequence we will study the irreducibility (or indeed the maximality of the Galois group) of what Katz calls the \emph{reduced} $L$-function: 
\begin{equation}\label{red-char}
 L_{\rm red}(E/K;T)=\det(1-TA)_{\rm red}=\begin{cases} &\det(1-TA)/(1-\det(A)T)\,,\,\text{if}\,\,N\,\text{is odd}\,,\\
                                &\det(1-TA)/(1-T^2)\,,\,\text{if}\,\,N\,\text{is even and}\,\,\det(A)=-1\,,\\
                                 &\det(1-TA)\,,\,\text{otherwise}\,.
                 \end{cases}
 \end{equation}

\subsection{Quadratic twists and a particular cohomology space}\label{quad-twist}

  Notice that instead of the above piece of data $E/\F_q(t)$, we can suppose equivalently that we are given an elliptic fibration
  $$
  \mt{E}\ra \PP^1\,.
  $$
  
   Let $f\in K^\times/(K^\times)^2$ (i.e. we fix a class of $K^\times$ modulo the nonzero squares of $K$). We denote by $E_f$ the quadratic twist of $E$ by $f$: this is the elliptic curve such that $E_f\simeq E$ over a quadratic extension of $K$ but not over $K$ itself. For each such $f$ we have an elliptic pencil
   $$
   \mt{E}_f\ra \PP^1\,.
   $$ 

 The group of sections $\mt{E}_f(\PP^1)$ for that fibration can be identified with the Mordell-Weil group $E_f(K)$ and the multiplication by $\ell$ morphism
 $$
 E_f(K)\stackrel{\times \ell}{\ra} E_f(K)\,,
 $$
gives rise, for each $\ell$ invertible in $\F_q$, to  the isogeny of group schemes
$$
\mt{E}^{{\rm lisse}}_f\stackrel{\times \ell}{\ra} \mt{E}^{{\rm lisse}}_f\,,
$$
where $\mt{E}^{{\rm lisse}}_f$ is the smooth part of the N\'eron model $\mt{E}_f\ra \PP^1$ of $E/K$. The kernel $\mt{E}_{f,\ell}$ of that last arrow is an \'etale group scheme over $\PP^1$ and the fiber of $\mt{E}_{f,\ell}$ over a geometric generic point of $\PP^1$ is $E_f[\ell]$, the $\ell$-torsion of $E_f(K)$ (it might only be a subspace of it though, if we do not assume the geometric point we consider to be generic).
\par
 Now let $\mt{V}_{f,\ell}$ denote the \'etale cohomology group $H^1(\PP^1\times \overline{\F_q},\mt{E}_{f,\ell})$. There is an additional quadratic structure on $\mt{V}_{f,\ell}$. Indeed the usual Weil pairing on $E_f[\ell]\times E_f[\ell]$ extends to a non-degenerate alternating pairing
 $$
 \mt{E}_{f,\ell}\times \mt{E}_{f,\ell}\ra \F_\ell(1)\,,
 $$
where $\F_\ell(1)$ denotes the usual Tate twist. Poincar\'e duality yields a non degenerate symmetric pairing
$$
\mt{V}_{f,\ell}\times \mt{V}_{f,\ell}\ra H^2(\PP^1\times \overline{\F_q},\F_\ell(1))\,.
$$

 That last cohomology space being isomorphic to $\F_\ell$, we end up with a non-degenerate symmetric bilinear form on $\mt{V}_{f,\ell}$ for which the endomorphism $\Fr_q$ (induced on $\mt{V}_{f,\ell}$ by the global Frobenius on $\PP^1$) is a conjugacy class of isometries (i.e. of elements of the associated orthogonal group $O(\mt{V}_{f,\ell})$).

\subsection{A $1$-parameter family of quadratic twists}
 With notation as above, we work under the assumptions of~\cite[Section 6.2]{H}: we suppose that $\mt{E}\ra \PP^1$ has at least one fiber of multiplicative reduction away from $\infty$ and we fix a nonzero polynomial $m\in\F_q[t]$ which vanishes at  (at least) one point of the locus of multiplicative reduction so that, for any $f$, the fibration $\mt{E}_f\ra \PP^1$ also has at least one fiber of multiplicative reduction away from $\infty$. Now consider for every integer $d\geqslant 1$
 $$
 F_d=\{f\in\overline{\F_q}[t]\mid f\text{ is squarefree }, \deg(f)=d, {\rm gcd}(f,m)=1\}\,.
 $$

 A remarkable property that $F_d$ satisfies is that the degree of the $L$-function of the twisted curve $E_f/\F_{q^n}$ does neither depend on the choice of $f\in F_d(\F_{q^n})$ nor on $n$ but only on $d$ (see~\cite[beginning of $6.2$]{H}). The degree $N$ of $L(E_f/\F_{q^n}(t);T)$ is explicitly given in Lemma $6.2$ of loc. cit. by
\begin{equation} \label{N-M-A}
N=\deg(M_f)+2\deg(A_f)-4\,,
\end{equation}
where $M_f$ (resp. $A_f$) is the divisor of multiplicative (resp. additive) reduction of $\mt{E}_f\ra \PP^1$.
 
\par
\medskip
 Instead of considering the whole variety $F_d$, we restrict ourselves to a one parameter subfamily of polynomials in $F_d$. The reason for such a restriction will become clear when we get into the details of the estimates we want to establish. The corresponfing curve is the following open subset of $\Aa^1$:
 $$
 U_g=\{c\in \Aa^1(\overline{\F_q})\mid (c-t)g(t)\in F_d\}
 $$
where $g$ is a fixed element of $F_{d-1}$. The geometric points of $U_g$ can easily be seen as the complementary in $\Aa^1(\overline{\F_q})$ of the union of the set of roots of $g$ and $m$. Finally, for $c\in U_g(\F_q)$ let us denote by $E_c$ the quadratic twist of $E$ by the $\F_q$-polynomial $(c-t)g(t)\in F_d(\F_q)$. We can state a sample of our main result as follows:

  \begin{theorem} \label{estim_irred}
   With notation as above, let $L_{{\rm red},c}$ denote the reduced $L$-function of the quadratic twist $E_c$ of $E$ . For $N\geqslant 5$, $d=\deg(g)+1\geqslant d_0(E)$, and $q\geqslant q_0(E)$, we have 
    $$
    |\{c\in \A^1(\F_q)\mid g(c)\not=0, m(c)\not=0 \text{ and }  L_{{\rm red},c}\text{ is reducible }\}|\ll q^{1-\gamma}\log q\,,
    $$
  where the implied constant depends only on $E$ and where we can choose $2\gamma^{-1}=7N^2-7N+4$.
   \end{theorem}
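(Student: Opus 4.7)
The plan is to realise reducibility of $L_{{\rm red},c}$ as a Frobenius condition that can be sieved. If $L_{{\rm red},c}$ is reducible in $\Q[T]$, then by Gauss's lemma it factors in $\Z[T]$, so its reduction modulo any odd prime $\ell\neq p$ is reducible in $\F_\ell[T]$. As recalled in Section~\ref{quad-twist}, the reduction modulo $\ell$ of $L_u(E_c/K;T)$ is exactly $\det(1-T\Fr_{q,c})$ acting on the $\F_\ell$-space $\mt{V}_{c,\ell}$, on which $\Fr_{q,c}$ is an isometry for the symmetric pairing obtained from Poincar\'e duality. Translating the three cases of~(\ref{red-char}) into this finite-field picture produces, for each $\ell$ in a set $L$ avoiding finitely many bad primes, a well-defined notion of reduced characteristic polynomial over $\F_\ell$ and a conjugacy-invariant ``sieve subset'' $\Omega_\ell$ of the arithmetic monodromy group $G_\ell$ of the sheaf $c\mapsto \mt{V}_{c,\ell}$ on $U_g$, namely the set of elements whose reduced characteristic polynomial factors.

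The first step is then to pin down $G_\ell$. Under the hypotheses $N\geqslant 5$ and $d\geqslant d_0(E)$, the monodromy theorem of Hall~\cite{H} identifies $G_\ell$ as a big subgroup of $O(\mt{V}_{c,\ell})$ for all $\ell$ sufficiently large (in terms of $E$). This is the orthogonal counterpart of the symplectic large-monodromy results underlying~\cite{Ch,KoCrelle}, and makes $G_\ell$ close enough to the full orthogonal group that representation-theoretic and counting estimates over $O(N,\F_\ell)$ transfer. With this in hand, feeding the datum $(G_\ell,\Omega_\ell)_{\ell\in L}$ into the large sieve inequality of Section~\ref{ls} reduces the theorem to two essentially independent estimates: a bound for the ``arithmetic'' sieve constant in terms of $q$ and $|L|$, obtained from the Riemann hypothesis over function fields and Deligne's bounds on sums of Betti numbers of auxiliary covers of $U_g$; and a uniform upper bound for the local density $|\Omega_\ell^\sharp|/|G_\ell^\sharp|$ on each prime.

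The hard part will be this local density estimate. The disconnectedness of $O(N)$ and the non-simple-connectedness of $\spin(N)$, emphasised in the introduction, rule out a direct transplantation of Chavdarov's symplectic argument: one must stratify $G_\ell$ according to determinant, spinor norm, and the parity of $N$ (mirroring the three cases of~(\ref{red-char})) and estimate on each stratum the proportion of classes whose reduced characteristic polynomial factors. This is exactly what the author's earlier results~\cite{J} are designed for; in Section~\ref{main-th} I would recall the parametrisation of conjugacy classes of $O(N,\F_\ell)$ by elementary-divisor data with sign invariants and sharpen the counts of~\cite{J} to obtain $|\Omega_\ell^\sharp|/|G_\ell^\sharp|\ll 1/\ell$ uniformly in $\ell$, the saving of a single power of $\ell$ reflecting that the obstruction to irreducibility of a generic (reduced) characteristic polynomial is codimension one. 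Finally, optimising the cardinality of $L$ against the Betti-number contribution from each prime yields the bound $q^{1-\gamma}\log q$ with $2\gamma^{-1}=7N^2-7N+4$; the $\log q$ factor is the standard loss from estimating the prime-counting sum over $L$.
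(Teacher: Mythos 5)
Your overall architecture is sound: realise reducibility as a Frobenius condition, invoke Hall's big-monodromy theorem, feed this into Kowalski's coset sieve, and optimise the cutoff $L$ against the Betti-number bound. However, the local-density step — which you yourself flag as ``the hard part'' — is wrong both in direction and in magnitude, and this is a genuine gap.

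You define $\Omega_\ell$ to be the set of elements of $G_\ell$ whose reduced characteristic polynomial \emph{factors} over $\F_\ell$, and you propose to establish the upper bound $|\Omega_\ell^\sharp|/|G_\ell^\sharp|\ll 1/\ell$. This is unattainable: among monic degree-$N$ polynomials over $\F_\ell$ (or among conjugacy classes of $O(N,\F_\ell)$), the proportion with \emph{irreducible} characteristic polynomial is of order $1/N$, so the reducible ones make up a proportion close to $1-1/N$, not $O(1/\ell)$. The ``codimension one'' heuristic you invoke governs the vanishing of a discriminant, not the arithmetic factorisation type of a polynomial over a finite field; reducibility mod $\ell$ is not a Zariski-codimension-one condition and no power of $\ell$ is saved. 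Moreover, in the sieve inequality~(\ref{ls-ineg}) the role of the local input is a \emph{lower} bound $|\Theta_\ell|/|G_\ell^{g,V}|\geqslant c(N)$ for the density of the \emph{good} set $\Theta_\ell$ (elements with irreducible reduced characteristic polynomial, plus discriminant and spinor-norm constraints), uniform in $\ell$ but allowed to decay polynomially in $N$. The paper obtains $|\Theta_\ell^{(1)}|/|\Omega(N,\F_\ell)|\geqslant 1/(4N^2)$ in Lemma~\ref{estim-theta}; it is this $1/N^2$ (not any $1/\ell$) that, combined with the Prime Number Theorem, produces $H\gg N^{-2}L/\log L$, and the choice $L=q^{1/(2A)}$ with $A=7N(N-1)/4+1$ then gives the exponent $\gamma$.

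A second, smaller omission: you mention the disconnectedness of $O(N)$ and the non-simple-connectedness of ${\bf SO}(N)$ abstractly, but the concrete consequence is not just that the counting is harder. Because proper subgroups of $O(N,\F_\ell)\times O(N,\F_{\ell'})$ can surject onto both factors (e.g. matching determinants), linear disjointness of the sheaves $\mt{T}_{d,\ell}$ over $U_g$ itself can fail. One must pass to an auxiliary \'etale Galois cover $V_g\to U_g$ with elementary-$2$-group Galois group — built from the subcovers corresponding to $\Omega(N,\F_\ell)$ in Lemma~\ref{lemme-chris} — and run the coset sieve on the preimages of the classes in $G(V_g/U_g)$. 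Without this the sieve inequality~(\ref{ls-ineg}) does not apply, and it is also why a factor $|G(V_g/U_g)|$ appears in the final bound of the stronger Theorem~\ref{estim_petitgalois}.
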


\section{A large sieve inequality} \label{ls}

 In this section, we describe the large sieve framework thanks to which we will finally obtain quantitative estimates such as the one stated in Theorem~\ref{quant-katz}. The idea of using sieve methods in algebro-geometric contexts was first introduced by Kowalski in~\cite{KoCrelle} to study the irreducibility and the maximality of the Galois group of numerators of zeta functions of certain curves over finite fields (see~\cite[Chap. 8]{KoLS} for a general discussion on the subject and~\cite{KoIMRN} for an application to algebraic independence of the zeros of the numerators of those zeta functions).

 \subsection{The geometric setting} \label{geom-setup}
 To begin with, let us fix a parameter variety $U/\F_q$. We assume it to be smooth, affine and geometrically connected with dimension $d\geqslant 1$. If $\bar{U}$ denotes the extension of scalars $U\times \overline{\F_q}$ to a separable closure of $\F_q$ and $\bar{\eta}$ is a fixed geometric point on $U$, we may denote by $\pi_1(U,\bar{\eta})$ (resp. $\pi_1(\bar{U},\bar{\eta})$) the \'etale arithmetic (resp. geometric) fundamental group of $U/\F_q$. These groups fit the exact sequence
 \begin{equation} \label{ex-seq-pi1}
\begin{CD}
 1 @>>> \pi_1(\bar{U},\bar{\eta}) @>>>\pi_1(U,\bar{\eta}) @>{\deg}>> \hat{\Z} @>>> 1\,,
 \end{CD}
 \end{equation}
 where $\hat{\Z}$ stands for the profinite completion of $\Z$ and the degree map $\deg$ is such that $\deg(\Fr_u)=-n$ if $\Fr_u$ denotes the local geometric Frobenius at $u\in U(\F_{q^n})$, for any $n\geqslant 1$. 
 
 \par
 \medskip
  The next piece of data is, for $\ell$ running over a fixed set of primes $\Lambda$ (that will be specified in the application we have in mind), a family of sheaves $(\mt{F}_\ell)_\ell$ of $\F_\lambda$-vector spaces on $U$ (where $\F_\lambda$ is, for each $\ell$, a fixed finite extension of $\F_\ell$) with rank $N\geqslant 1$. Equivalently, for each $\ell\in\Lambda$, we are given a continuous representation
  $$
  \rho_\ell: \pi_1(U,\bar{\eta})\ra GL(N,\F_\lambda)\,.
  $$
 
 The \emph{arithmetic $\F_\ell$-monodromy group} $G_\ell$ associated to that representation is defined as its image whereas the \emph{geometric $\F_\ell$-monodromy group} attached to $\rho_\ell$ is the image $G_\ell^g$ of the subgroup $\pi_1(\bar{U},\bar{\eta})$. Unlike the case treated in~\cite{KoCrelle} where the monodromy groups involved are symplectic groups, we will have to deal with orthogonal monodromy groups in our applications. So we suppose that, for each $\ell$, we have an inclusion $G_\ell\subset O(N,\F_\ell)$ into the orthogonal group for some $\F_\ell$-quadratic space of dimension $N$.
 
 \par
 \medskip
  Finally we put the emphasis on the first major difference with the case of a geometric setting involving symplectic monodromy: we need to suppose we are given an \'etale Galois cover
  $$
  \kappa: V\ra U\,,
  $$ 
with abelian Galois group $G(V/U)$ which we assume to be an elementary $2$-group. If $\bar{\mu}$ denotes a geometric point on $V$ such that $\kappa\circ\bar{\mu}=\bar{\eta}$, then, by functoriality, we know these groups fit the exact sequence
$$
1\ra \pi_1(V,\bar{\mu})\stackrel{\pi_1\kappa}{\ra} \pi_1(U,\bar{\eta})\stackrel{\tilde{\varphi}}{\ra}G(V/U)\ra 1\,,
$$
where the quotient morphism $\phit$ is induced by the map
$$
\varphi: U(\F_q)\ra G(V/U)
$$
such that the image of $x\in U(\F_q)$ ``is'' the action of $\pi_1(U,\bar{\eta})$ on $\kappa^{-1}\{x\}$.
\par
 For each $\ell\in\Lambda$, the sheaf $\mt{F}_\ell$ can be pulled back to a lisse $\F_\ell$-adic sheaf $\kappa^*\mt{F}_\ell$ on $V$. Our crucial assumption is that the sheaf obtained has big geometric $\F_\ell$-monodromy in the sense of Hall (see~\cite{H}), for every $\ell\in\Lambda$ i.e.
 $$
 G_\ell^{g,V}=\rho_\ell(\pi_1(\bar{V},\bar{\mu}))\supset\Omega(N,\F_\ell)\,,
 $$
 where we omit to write the inclusion $\pi_1\kappa$ for brevity and where $\Omega(N,\F_\ell)$ stands for the derived group of $O(N,\F_\ell)$.
\par
Moreover we assume that the family $(\kappa^*\mt{F}_\ell)_\ell$ is \emph{linearly disjoint} in the sense that, for every $\ell\not=\ell'\in\Lambda$, the product map
 $$
\rho_{\ell,\ell'}= \rho_\ell\times\rho_{\ell'}:\pi_1(U,\bar{\eta})\ra G_\ell\times G_{\ell'}
 $$
satisfies 
$$
\rho_{\ell,\ell'}(\pi_1(\bar{V},\bar{\mu}))=G_{\ell,\ell'}^{g,V}=G_\ell^{g,V}\times G_{\ell'}^{g,V}\supset \Omega(N,\ell)\times\Omega(N,\ell')\,.
$$

 In order to unify the notation, the morphism $\rho_{\ell,\ell'}$ (resp. the group $G_{\ell,\ell'}^{g,V}$) will simply denote $\rho_\ell$ (resp. $G_\ell^{g,V}$) in the case $\ell=\ell'$.
 \par
 \medskip
 Now we obtain for each $\ell\in \Lambda$ the following diagram with exact rows and surjective downward arrows, by putting the above data together with~(\ref{ex-seq-pi1}):
 \begin{equation} \label{diagcosetsieve}
 \begin{CD} 
 1 @>>> \pi_1(\bar{V},\bar{\mu}) @>>> \pi_1(U,\bar{\eta}) @>(\deg,\phit)>> \Gamma @>>> 1\\
@. @VV \rho_\ell V @VV \rho_\ell V @VV{\rm pr}_\ell V\\
1 @>>> G_{\ell}^{g,V} @>>> G_\ell @> (\deg,\phit_\ell) >> \Gamma_\ell @>>> 1\,,
\end{CD}
 \end{equation}
where ${\rm pr}_\ell$ is defined in such a way that the diagram commutes, and, in the bottom row, $\deg$ stands for the quotient morphism $G_\ell\ra G_\ell/G_\ell^g$, with $G_\ell^g=\rho_\ell(\pi_1(\bar{U},\bar{\eta}))$.

\par
\medskip
 A few comments are in order here: first,~(\ref{diagcosetsieve}) is the analogue of the diagram $(2.2)$ in~\cite{KoCrelle}. Next, the quotient group $\Gamma\leqslant \hat{\Z}\times G(V/U)$ is abelian and the quotient group $\Gamma_\ell$ is a subgroup of the Kleinian group $\Z/2\Z\times \Z/2\Z$. Indeed the subgroup $\Omega(N,\F_\ell)$ is normal with index four in $O(N,\F_\ell)$ and the associated quotient is $\Z/2\Z\times\Z/2\Z$ (see~\cite[Section 6]{Ka} or~\cite[Section 2]{J}). The fact that, contrary to~\cite{KoCrelle}, we need to introduce the auxiliary variety $V$ is because of the assumption of linear disjointness. Indeed, as noticed in loc. cit., there are proper subgroups of $O(N,\F_\ell)\times O(N,\F_{\ell'})$ which project surjectively on both factors, e.g.
 $$
 H_{\ell,\ell'}=\{(g,g')\mid \det(g)=\det(g')\}\,,
 $$
where the equality of determinants is seen in $\{\pm 1\}$. As a consequence, if ever, in the applications, we come up with a family of sheaves $(\mt{F}_\ell)_\ell$ on $U$ with geometric monodromy as big as $O(N,\F_\ell)$, then we cannot hope for linear disjointness for $(\mt{F}_\ell)_\ell$ itself. We emphasize here that such precautions need not be taken in the case (studied in~\cite{KoCrelle}) of a family of sheaves with symplectic $\F_\ell$-monodromy.

\subsection{Sieving for Frobenius}
 
  Thanks to the data contained in~(\ref{diagcosetsieve}), we can perfrom a ``coset sieve'' as described in a very general context in~\cite[Chap. 3]{KoLS} with complements that can be found in~\cite{J}. In our case the finite set which is to be sifted is given, for a fixed $\alpha\in G(V/U)$, by
  $$
  X_\alpha=\{u\in U(\F_q)\mid \varphi(u)=\phit(\Fr_u)=\alpha\}\,.
  $$

 Note, that a particular feature of our study is that two left cosets  are considered simultaneously: on the one hand we restrict to local Frobenii with degree $-1$ (i.e. to $\F_q$-rational points on $U$) and on the other hand, among the remaining Frobenii, we only keep those mapping to $\alpha$ under $\phit$.
 \par
 Then we define $Y$ to be ``the set $X_\alpha$ seen on the fundamental group level'':
 $$
 Y=\{g^\sharp\in\pi_1(U,\bar{\eta})^\sharp\mid \phit(g^\sharp)=\alpha\ \text{ and } \deg(g^\sharp)=-1\}\,,
 $$
where $g^\sharp$ denotes the conjugacy class of $g$.

\par
\medskip
 For each $\ell\in\Lambda$, we can then define $Y_\ell=\rho_\ell(Y)$ so that if we set $\alpha_\ell={\rm pr}_\ell(-1,\alpha)$, then $Y_\ell$ is the left coset for which $\alpha_\ell$ is a representative. Now let $L\geqslant 1$ be a fixed integer and let $\Theta=(\Theta_\ell)_\ell$ be a family of conjugacy invariant subsets $\Theta_\ell\subset Y_\ell$. In this section, our purpose is to give an upper bound for the cardinality of the set
 $$
 S(X_\alpha,\Theta,L)=\{u\in X_\alpha\mid \rho_\ell(\Fr_u)\not\in \Theta_\ell \text{ for all } \ell\in \Lambda, \ell\leqslant L\}\,.
 $$
 
 Note that, alternatively, we could define $\Lambda$ as a subset of the prime numbers smaller than a fixed $L\geqslant 1$. This is in fact what we will do in the proof of Theorem~\ref{estim_petitgalois}. 
 \par
 Let $\Pi_\ell$ denote a set of representatives (containing $1$) for the equivalence classes of irreducible representations of $G_\ell$ identifying two irreducible representations $\pi$ and $\tau$ if and only if their restrictions to $G_\ell^{g,V}$ coincide. If $\Pi_\ell^*$ denotes the subset of $\Pi_\ell\setminus\{1\}$ consisting of representations $\pi$ whose character restricted to $Y_\ell$ does not identically vanish, then we have the following inequality (a proof of which can be found, in a more general context, in~\cite[Chap. 3]{KoLS})
 \begin{equation} \label{ls-ineg}
 |S(X_\alpha,\Theta,L)|\leqslant \Delta H^{-1}\,,
 \end{equation}
 where 
 $$
 H=\sum_{\ell\leqslant L}{\bigl(|\Theta_\ell|(|G_\ell^{g,V}|-|\Theta_\ell|)^{-1}\bigr)}\,,
 $$
 and
 $$
 \Delta\leqslant \max_{\ell\leqslant L}\max_{\ell\in\Pi_\ell^*}\sum_{\ell'\leqslant L}{\sum_{\tau\in\Pi_{\ell'}^*}{|W(\pi,\tau)|}}\,,
 $$
 where $W(\pi,\tau)$ is an ``exponential sum'' given by
 \begin{equation}  \label{W-expsum}
 W(\pi,\tau)=\frac{1}{\sqrt{|\hat{\Gamma}_\ell^\pi||\hat{\Gamma}_{\ell'}^\tau}|}\sum_{u\in X_\alpha}{\Tr\big(\pi\rho_\ell(\Fr_u)\big)\overline{\Tr\big(\tau\rho_{\ell'}(\Fr_u)\big)}}\,,
 \end{equation}
 and $\hat{\Gamma}_\ell^\pi=\{\psi\text{ character of } G_\ell/G_\ell^{g,V}\mid \pi\simeq\pi\otimes \psi\}$.
 
 \par
 \medskip

  The inequality~(\ref{ls-ineg}) will be refered to as the large sieve inequality and $\Delta$ will be called the large sieve constant. Obviously, obtaining an efficient upper bound for $|S(X_\alpha,\Theta,L)|$ proceeds in two steps: first we need to find a good upper bound for $\Delta$ (this is the task this section is devoted to) and second we need to produce a lower bound for $H$ (which is one of the goals of Section~\ref{main-th}). As far as $\Delta$ is concerned, the above a priori bound involving the sums $W(\pi,\tau)$ suggests that we could try to estimate those sums individually and hope for as much cancellation as possible. In order to apply that strategy we need to make a few additional assumptions. If our base variety is a curve, that involves the notion of \emph{compatible system of sheaves}:

  \begin{definition} \label{compatible-sys}
  With the same notation as above, a system of continuous representations $(\rho_\ell)_\ell: \pi_1(U,\bar{\eta})\ra GL(r,k_\ell)$ (where $k_\ell$ is a finite extension of $\F_\ell$) indexed by a set of primes $\Lambda$ not containing $p$, is said to be \emph{compatible} if there exists a number field $\mt{K}/\Q$ and for each $\ell\in\Lambda$ a prime ideal $\lambda\in\Z_{\mt{K}}$ (the ring of integers of $\mt{K}$) such that, on the one hand $\Z_{\mt{K}}/\lambda\simeq k_\ell$, and on the other hand there exists a continuous morphism
  $$
  \tilde{\rho}_\ell: \pi_1(U,\bar{\eta})\ra GL(r,\Z_{\lambda})\,,
  $$
 (where $\Z_\lambda$ is the ring of integers of the completion $\mt{K}_\lambda$ of $\mt{K}$ with respect to $\lambda$) such that
 \begin{itemize}
 \item for each $\ell\in\Lambda$, the reduction $\tilde{\rho}_\ell$ modulo $\lambda$ is isomorphic to $\rho_\ell$,
 \item for each $\ell\in\Lambda$, each extension $\F_{q^n}/\F_q$ and each $u\in U(\F_{q^n})$, the reversed characteristic polynomial
 $$
 \det(1-T\tilde{\rho}_\ell(\Fr_u))\in \Z_\lambda[T]
 $$
 has coefficients in $\Z_\mt{K}$ and is independent of $\ell$. 
 \end{itemize} 
  \end{definition}

 We can now state the key proposition, giving the expected cancellation for the sums $W(\pi,\tau)$ under certain condtions.

 \begin{proposition}\label{estim-W}
  With the same notation as above, suppose $\pi\in\Pi_\ell^*$ and $\tau\in\Pi_{\ell'}^*$. Then the normalised exponential sums $W(\pi,\tau)$ satisfy:
  \begin{enumerate}
  \item if $G_\ell^{g,V}$ has order prime to $p$ for all $\ell\in\Lambda$, then
  $$
  W(\pi,\tau)=\delta((\ell,\pi),(\ell',\tau))\eps_\pi q^d+O\Bigl(q^{d-1/2}|G_{\ell,\ell'}||G(V/U)|(\dim\pi)(\dim\tau)\Bigr)
  $$
  where the implied constant depends only on $\bar{U}$.
  \item if $U$ is a curve (i.e. $d=1$) and $(\mt{F}_\ell)_\ell$ forms a compatible system of sheaves (obtained as the reduction of a system $(\tilde{\mt{F}}_\ell)_\ell$ of $\Z_\ell$-adic sheaves), then
  $$
  W(\pi,\tau)=\delta((\ell,\pi),(\ell',\tau))\eps_\pi q+O\Bigl(q^{1/2}(\dim\pi)(\dim\tau)\Bigr)\,,
  $$
  where the implied constant only depends on the compactly supported Euler-Poincar\'e characteristic of $\bar{U}$ and on the system $(\tilde{\mt{F}}_\ell)$ on $\bar{U}$.
  \par
  \medskip
  Moreover, in both cases, the constant $\eps_\pi$ is uniformly bounded by $1$ (i.e. $\eps_\pi\leqslant 1$ independently of $\ell$ and $\pi$).
  \end{enumerate} 
 \end{proposition}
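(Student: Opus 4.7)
The plan is to rewrite $W(\pi,\tau)$ as a sum of Frobenius traces on suitable lisse sheaves on $U$, so that the Grothendieck--Lefschetz trace formula combined with Deligne's weight estimates yields the desired asymptotic. First I would remove the condition $\varphi(u)=\alpha$ defining $X_\alpha$ by Fourier expansion on the abelian group $G(V/U)$: for $u\in U(\F_q)$,
$$
\mathbf{1}_{X_\alpha}(u)=\frac{1}{|G(V/U)|}\sum_{\chi\in \widehat{G(V/U)}}\bar{\chi}(\alpha)\,\chi(\varphi(u)).
$$
Each $\chi\circ\varphi$ is a continuous one-dimensional character of $\pi_1(U,\bar{\eta})$, hence corresponds to a rank one lisse $\overline{\Q}_\ell$-sheaf $\mt{L}_\chi$ on $U$ whose Frobenius traces give the values $\chi(\varphi(u))$. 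The representations $\pi\circ\rho_\ell$ and $\tau\circ\rho_{\ell'}$ can likewise be realised over $\overline{\Q}_\ell$: via a Brauer lift in case (1)---available because $|G_\ell^{g,V}|$ is prime to $p$---and directly via the compatible system $(\tilde{\mt{F}}_\ell)_\ell$ in case (2). Packaging everything as the Frobenius trace on the tensor product sheaf $\mt{G}_{\chi,\pi,\tau}$ on $U$ reduces $W(\pi,\tau)$ to a linear combination, with coefficients of modulus at most $1/|G(V/U)|$ times the prefactor in~(\ref{W-expsum}), of the sums
$$
S_\chi=\sum_{u\in U(\F_q)}\Tr\bigl(\Fr_u\mid \mt{G}_{\chi,\pi,\tau}\bigr).
$$

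Next I would apply the Grothendieck--Lefschetz trace formula,
$$
S_\chi=\sum_{i=0}^{2d}(-1)^i\Tr\bigl(\Fr_q\mid H^i_c(\bar{U},\mt{G}_{\chi,\pi,\tau})\bigr),
$$
and isolate the contribution of the top cohomology $H^{2d}_c$, which is pure of weight $2d$ and equals, up to a Tate twist, the $\pi_1(\bar{V},\bar{\mu})$-coinvariants of the underlying representation. By Deligne's theorem the remaining cohomology groups are mixed of weight at most $i$, hence contribute $O(q^{d-1/2})$ times the sum of Betti numbers, which by Katz's estimates can be bounded in case (1) by $|G_{\ell,\ell'}|(\dim\pi)(\dim\tau)$ times a constant depending only on $\bar{U}$, and in case (2)---where only $H^0_c$, $H^1_c$, $H^2_c$ occur---by $(\dim\pi)(\dim\tau)$ times a constant depending on the compactly supported Euler--Poincar\'e characteristic of $\bar{U}$ and on the system $(\tilde{\mt{F}}_\ell)$. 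Summing over the $|G(V/U)|$ characters $\chi$ and combining with the prefactor gives the stated error terms.

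It remains to identify the main term. Here I would invoke the linear disjointness hypothesis: the image of $\rho_{\ell,\ell'}$ restricted to $\pi_1(\bar{V},\bar{\mu})$ contains $\Omega(N,\F_\ell)\times\Omega(N,\F_{\ell'})$, and Schur's lemma applied to each factor separately forces the space of coinvariants of $\pi\circ\rho_\ell\otimes\overline{\tau\circ\rho_{\ell'}}\otimes(\chi\circ\varphi)$ to vanish unless $\ell=\ell'$ and the restrictions of $\pi$ and $\tau$ to $G_\ell^{g,V}$ are isomorphic, i.e.\ unless $\pi$ and $\tau$ are identified in $\Pi_\ell$. In that diagonal case, the contributing characters $\chi$ are exactly those whose restriction to $\phit(\pi_1(U,\bar\eta))$ corresponds to the self-twists of $\pi$ parametrised by $\hat{\Gamma}_\ell^\pi$, and the count of these characters precisely cancels the normalising factor $(|\hat{\Gamma}_\ell^\pi|\,|\hat{\Gamma}_{\ell'}^\tau|)^{-1/2}$ up to a root-of-unity phase $\eps_\pi$ coming from $\bar\chi(\alpha)$; the triangle inequality on this combination of unit vectors gives $|\eps_\pi|\leqslant 1$.

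The main obstacle in this program is the careful bookkeeping of the two cosets simultaneously---the degree $-1$ condition on Frobenius and the condition $\phit=\alpha$---against the characters $\psi\in\hat{\Gamma}_\ell^\pi$ parametrising self-twists of $\pi$ under the arithmetic/geometric quotient $G_\ell/G_\ell^{g,V}$. The fact, highlighted after~(\ref{diagcosetsieve}), that this quotient is the Kleinian group $\Z/2\Z\times\Z/2\Z$ rather than a cyclic group as in~\cite{KoCrelle}, forces one to track two independent characters at once in order to verify that no phase cancellation inflates $\eps_\pi$ beyond $1$. In case (1) there is the additional subtlety that the Brauer lift is not canonical, so one must check that both the main term and the Betti number bound are independent of this choice.
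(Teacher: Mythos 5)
Your plan follows essentially the same route as the paper: Fourier expansion on $G(V/U)$ to remove the coset condition, the Grothendieck--Lefschetz trace formula applied to the tensor sheaf $\mt{F}_\chi(\pi,\tau)$, Deligne's weight bound for $H^i_c$ with $i<2d$ (giving the $O(q^{d-1/2})$ error via Betti-number estimates from~\cite[Prop. 8.9]{KoLS}), and linear disjointness plus~\cite[Lemma 3.4]{KoLS} to kill the diagonal cohomology unless $(\ell,\pi)=(\ell',\tau)$. Your remark that the error term in case $(1)$ needs a Brauer lift (not canonical, hence the check of independence) is a legitimate observation that the paper subsumes into the citation to Kowalski.

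The one place where your account is genuinely under-determined is the derivation of $\eps_\pi\leqslant 1$. The triangle-inequality bound you invoke requires knowing that for each $\chi$ the dimension of $H^{2d}_c(\bar{U},\mt{F}_\chi(\pi,\pi))$, i.e.\ the multiplicity $\gamma_\ell^{\pi,\chi}$, is at most $|\hat{\Gamma}_\ell^\pi|$; this is precisely what the injection~(\ref{injection}) established after Lemma~\ref{character_lowerline} delivers, and it in turn rests on the purely representation-theoretic Lemma~\ref{lem-rep-th} (passage from the bilayer of invariants $\bigl((\pi\otimes\bar{\pi}\otimes\chi)^\Omega\bigr)^K$ to $(\pi\otimes\bar{\pi}\otimes\chi)^G$) and on the construction of the auxiliary character $\psi_\ell^\chi$ on $G_\ell^g$, which exploits the chain $\Omega(N,\F_\ell)\subset G_\ell^{g,V}\subset G_\ell^g\subset O(N,\F_\ell)$ and the Kleinian quotient. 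You correctly flag this ``careful bookkeeping'' of two simultaneous cosets as the main obstacle and correctly diagnose why it is harder than in Kowalski's symplectic setting, but you do not carry it out, and without it the assertion that ``the count of these characters precisely cancels the normalising factor'' is not justified: a priori the multiplicity of the top cohomology could exceed $|\hat{\Gamma}_\ell^\pi|$ and the claimed bound on $\eps_\pi$ would fail. (A secondary, smaller imprecision: $\eps_\pi$ is not a single root-of-unity phase but an average of the dimensions $\gamma_\ell^{\pi,\chi}/|\hat{\Gamma}_\ell^\pi|$ against $\overline{\chi(\alpha)}$; the paper also proves, in a separate lemma, that Frobenius acts as a single scalar $\pm q^d$ on $H^{2d}_c$ rather than merely with eigenvalues of modulus $q^d$, though for the bound $|\eps_\pi|\leqslant 1$ the weaker statement suffices.)
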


 \begin{proof}
  Forgetting for a while the normalisation factor in~(\ref{W-expsum}), we consider
  $$
  W'(\pi,\tau)=\sum_{u\in X_\alpha}{\Tr\big(\pi\rho_\ell(\Fr_u)\big)\overline{\Tr\big(\tau\rho_{\ell'}(\Fr_u)\big)}}\,.
  $$
  
  Using Frobenius reiprocity yields
  $$
  W'(\pi,\tau)=\sum_{u\in U(\F_q)}{\Tr\big(\pi\rho_\ell(\Fr_u)\big)\overline{\Tr\big(\tau\rho_{\ell'}(\Fr_u)\big)}}\frac{1}{|G(V/U)|}\sum_{\chi\in\hat{G}(V/U)}{\overline{\chi(\alpha)}\chi(\phit(\Fr_u))}\,,
  $$
  so that we are naturally led to consider the sums
  $$
  W_\chi(\pi,\tau)=\sum_{u\in U(\F_q)}{\Tr\big(\pi\rho_\ell(\Fr_u)\big)\overline{\Tr\big(\tau\rho_{\ell'}(\Fr_u)\big)\chi(\phit(\Fr_u))}}\,,
  $$
  for each character $\chi\in\hat{G}(V/U)$. The last two sums are thus related by the formula
  $$
   W'(\pi,\tau)=\frac{1}{|G(V/U)|}\sum_{\chi\in\hat{G}(V/U)}{\overline{\chi(\alpha)}W_\chi(\pi,\tau)}\,.
  $$
  
  For a fixed $\chi$, the sum $W_\chi(\pi,\tau)$ is closely related to the representation $\pi\rho_\ell\otimes\overline{\tau\rho_{\ell'}}\otimes\chi\phit$ of the arithmetic fundamental group $\pi_1(U,\bar{\eta})$. To that representation corresponds an \'etale sheaf $\mt{F}_\chi(\pi,\tau)$, so that, applying the Grothendieck-Lefschetz trace formula (see~\cite{Gr}), we get
  $$
  W_\chi(\pi,\tau)=\sum_{i=0}^{2d}{(-1)^i\Tr(\Fr\mid H_c^i(\bar{U},\mt{F}_\chi(\pi,\tau)))}\,,
  $$
 where $\Fr$ denotes the morphism induced on the compactly supported \'etale cohomology by the global (geometric) Frobenius on $\bar{U}$.
 
 \par
 \medskip
  The whole point of the proof is to give a precise analysis of that sum. First let us denote
  $$
  [\pi,\bar{\tau}]=\begin{cases}&\pi\boxtimes\bar{\tau}\text{ if }\ell\not=\ell'\,,\\
                                &\pi\otimes\bar{\tau}\text{ if } \ell=\ell'\,,   
  \end{cases}
  $$
  where we distinguish the ``external'' tensor product ``$\boxtimes$'' from the ``internal'' tensor product ``$\otimes$''. Then, using the same notation as above, we have
  $$
    \pi\rho_\ell\otimes\overline{\tau\rho_{\ell'}}=[\pi,\bar{\tau}]\rho_{\ell,\ell'}\,.
  $$
  
  Using the additional notation $G_{\ell,\ell'}=\rho_{\ell,\ell'}(\pi_1(U,\bar{\eta}))$, we see that the representation $[\pi,\bar{\tau}]\rho_{\ell,\ell'}\otimes\chi\tilde{\varphi}$ corresponding to $\mt{F}_\chi(\pi,\tau)$, factors through a finite group:
  $$
  \pi_1(U,\bar{\eta})\stackrel{\rho_{\ell,\ell'}\times\tilde{\varphi}}{\ra} G_{\ell,\ell'}\times G(V/U)\stackrel{[\pi,\bar{\tau}]\boxtimes \chi}{\ra} GL((\dim \pi)(\dim \tau),\C)\,.
  $$
  
  So any eigenvalue of any $\Fr_u$, with $u\in U(\F_{q^n})$ acting on the fiber of $\mt{F}_\chi(\pi,\tau)$ at a geometric point $\bar{u}$ lying above $u$ is a root of unity. In other words, $\mt{F}_\chi(\pi,\tau)$ is pointwise pure of weight $0$. From a celebrated Theorem of Deligne~\cite[page 138]{deligne_weilII}, we deduce that the global geometric Frobenius $\Fr$ acts on each $H^i_c(\bar{U},\mt{F}_\chi(\pi,\tau))$ with eigenvalues of modulus $\leqslant q^w$ for some integer $w\leqslant i/2$.
 
 \par
 \medskip
  Now in the sum $W_\chi(\pi,\tau)$, the main contribution comes from the trace of Frobenius on the cohomology space with highest degree $H_c^{2d}(\bar{U},\mt{F}_\chi(\pi,\tau))$. The above argument yields the following estimate for the sum of the other contributions:

  \begin{lemme} \label{somme_betti}
  %Let $\sigma_c'(\bar{U},\mt{F}_\chi(\pi,\tau))=\sum_{i=0}^{2d-1}{\dim H_c^i(\bar{U},\mt{F}_\chi(\pi,\tau)))}$.
   We have the following estimates:
  \begin{enumerate}
  \item if $G_{\ell,\ell'}$ has order prime to $p$, then
  $$
  \bigl|\sum_{i=0}^{2d-1}{(-1)^i\Tr(\Fr\mid H_c^i(\bar{U},\mt{F}_\chi(\pi,\tau)))}\bigr|\leqslant q^{d-1/2}C(\bar{U})|G_{\ell,\ell'}||G(V/U)|(\dim \pi)(\dim \tau)\,,
  $$
  where $C(\bar{U})$ is a constant depending only on $\bar{U}$.
  \item if $d=1$ (i.e. $U$ is a curve) and the family $(\rho_\ell)$ is obtained by reducing a compatible system of $\ell$-adic sheaves, then
  $$
  \bigl|\sum_{i=0}^{2d-1}{(-1)^i\Tr(\Fr\mid H_c^i(\bar{U},\mt{F}_\chi(\pi,\tau)))}\bigr|\leqslant q^{1/2}C(\bar{U},(\rho_\ell))(\dim \pi)(\dim \tau)\,,
  $$
  where the constant $C(\bar{U},(\rho_\ell))$ depends only on the compactly supported Euler-Poincar\'e characteristic of $\bar{U}$ and the compatible system.
  \end{enumerate}
  \end{lemme}

  \begin{proof}
  This follows directly from the discussion preceding the statement of the lemma and~\cite[Prop. 8.9]{KoLS} (to which we refer for precise expressions for the constants $C(\bar{U})$
   in case $(1)$ and $C(\bar{U},(\rho_\ell))$ in case $(2)$).
   \par
   For instance, to derive $(2)$ from loc. cit., we need only notice that $(2)$ of~\cite[Prop. 8.9]{KoLS} holds for a representation $\rho$ of the type $\rho_{\ell,\ell'}\times\tilde{\varphi}$ with values in $G_{\ell,\ell'}\times G(V/U)$ since $G(V/U)$ is assumed to be an elementary $2$-group and thus can be seen as a subgroup of a product of type $\prod_i GL(1,k_i)$ where $k_i$ is a finite field with characteristic $\ell_i\not=p$. 
  \end{proof}

   It remains now to determine precisely what the main contribution is. It is enough to determine the dimension of the cohomology space of degree $2d$ and then to show that the global geometric Frobenius acts with eigenvalues which are all equal and have for common value $\pm q^d$. Notice first that we have, from the Hochschild-Serre spectral sequence,
  $$
 H_c^{2d}(\bar{U},\mt{F}_\chi(\pi,\tau))=H_c^{2d}(\bar{V},\kappa^*\mt{F}_\chi(\pi,\tau))^{G^g(V/U)}\,, 
  $$
  where the group $G^g(V/U)$ is the ``geometric'' version of the Galois group of the cover $\kappa :V\ra U$, i.e. it is defined via the exact sequence
  $$
  1\ra \pi_1(\bar{V},\bar{\mu})\ra\pi_1(\bar{U},\bar{\eta})\ra G^g(V/U)\ra 1\,.
  $$
  
   The cohomology space $H_c^{2d}(\bar{V},\kappa^*\mt{F}_\chi(\pi,\tau))$ coincides with the space of coinvariants (see~\cite[Sommes trig., Rem. 1.18d]{De})
   $$   
   \bigl(\pi\rho_\ell\otimes\overline{\tau\rho_{\ell'}}\otimes\chi\phit\bigr)_{\pi_1(\bar{V},\bar{\mu})}(-d)\,,
   $$
 which is the $d$-th Tate twist of the space of coinvariants of the representation $\pi\rho_\ell\otimes\overline{\tau\rho_{\ell'}}\otimes\chi\phit$ seen as a $\pi_1(\bar{V},\bar{\mu})$-module. First, we focus on dimension issues. In the last part of the proof we will show that Frobenius acts as $\pm {\rm Id}$ on the space of coinvariants $H_c^{2d}(\bar{U},\mt{F}_\chi(\pi,\tau))$ (or equivalently with eigenvalues all equal to $q^d$ or all equal to $-q^d$ on the $d$-th Tate twist of that space).

\par\medskip
 The Tate twist having no impact on the dimension, we end up trying to evaluate the dimension of the subspace of invariants:
 $$ \Bigl(\bigl(\pi\rho_\ell\otimes\overline{\tau\rho_{\ell'}}\otimes\chi\phit\bigr)_{\pi_1(\bar{V},\bar{\mu})}\Bigr)^{G^g(V/U)}\,.
 $$
 
 As $\chi\phit$ is trivial as a $\pi_1(\bar{V},\bar{\eta})$-module, we have an isomorphism
 $$
\bigl(\pi\rho_\ell\otimes\overline{\tau\rho_{\ell'}}\otimes\chi\phit\bigr)_{\pi_1(\bar{V},\bar{\mu})}\simeq \bigl(\pi\rho_\ell\otimes\overline{\tau\rho_{\ell'}}\bigr)_{\pi_1(\bar{V},\bar{\mu})}\otimes\chi\phit\,.
 $$ 
 
  Thanks to the assumption of linear disjointness, we have the isomorphism between the spaces of coinvariants
  $$
  \bigl(\pi\rho_\ell\otimes\overline{\tau\rho_{\ell'}}\bigr)_{\pi_1(\bar{V},\bar{\mu})}=([\pi,\bar{\tau}]\rho_{\ell,\ell'})_{\pi_1(\bar{V},\bar{\mu})}\simeq [\pi,\bar{\tau}]_{G_{\ell,\ell'}^{g,V}}
  $$

  So, as we are now dealing with complex representations of finite groups, that last space of coinvariants coincides with the space of invariants $[\pi,\bar{\tau}]^{G_{\ell,\ell'}^{g,V}}$. As $\pi\in\Pi_\ell^*$ and $\tau\in\Pi_{\ell'}^*$, we can invoke~\cite[Lemma 3.4]{KoLS} to deduce that this space of invariants can only be non trivial in the case where $(\ell,\pi)=(\ell',\tau)$. As a consequence the dimension we are interested in is that of the subspace of the $G^g(V/U)$-invariants of
  $$ 
  \bigl(\pi\rho_\ell\otimes\overline{\pi\rho_{\ell}}\bigr)_{\pi_1(\bar{V},\bar{\mu})}\otimes\chi\phit\,.
  $$
 
  Applying once more the above arguments, we deduce that the dimension we actually need to compute is that of the space (recall that $\chi\phit$ is a trivial $\pi_1(\bar{V},\bar{\mu})$-module)
  $$ \Bigl(\bigl(\pi\rho_\ell\otimes\overline{\pi\rho_{\ell}}\otimes\chi\phit\bigr)^{\pi_1(\bar{V},\bar{\mu})}\Bigr)^{G^g(V/U)}\,.
  $$
  
   To that purpose, we use the following lemma, which is purely representation theoretic.

   \begin{lemme} \label{lem-rep-th}
   Let $G$ be a compact group, $\Omega$ a normal compact subgroup of $G$ wtih abelian quotient $K$ fitting the exact sequence
   $$
   1\ra \Omega\ra G\ra K\ra 1\,.
   $$ 
   
   Let $k$ be an algebraically closed field with characteristic zero, $\pi$ a finite dimensional $k$-representation of $G$ and $\chi$ a (degree $1$) character of $K$. If $\bar{\pi}$ denotes the contragredient representation of $\pi$, then we have the isomorphism of $G$-modules
   $$
   \bigl((\pi\otimes\bar{\pi}\otimes\chi)^\Omega\bigr)^K\simeq (\pi\otimes\bar{\pi}\otimes\chi)^{G}\,,
   $$
  % the $\bar{\chi}$-component of the representation $\pi\otimes\bar{\pi}$.
   \end{lemme}

   \begin{proof}
 The space of invariants $(\pi\otimes\bar{\pi}\otimes\chi)^\Omega$ coincides with the projection of the restriction $(\pi\otimes\bar{\pi}\otimes\chi)_{|\Omega}$ on the trivial representation of $\Omega$. Using Frobenius reciprocity, this space can also be written
 $$
 \bigoplus_{\psi\in\hat{K}}{(\pi\otimes\bar{\pi}\otimes\chi)(\psi)}\,,
 $$   
which is nothing but the sum of the $\psi$-components of $\pi\otimes\bar{\pi}\otimes\chi$ (seen as a $G$-module) over the characters $\psi$ of $K$. This space can also be written
 $$
 \bigoplus_{\psi\in\hat{K}}{(\pi\otimes\bar{\pi})(\bar{\chi}\psi)}\,.
 $$
 
  Under this form, it is easy to deduce that the subspace of $K$-invariants of that representation is $(\pi\otimes\bar{\pi})(\bar{\chi})$, obtained by projecting on the trivial representation $1_K$. Finally, this subspace can also be written $(\pi\otimes\bar{\pi}\otimes\chi)(1)$ which, by definition, is the space of invariants
  $$
  (\pi\otimes\bar{\pi}\otimes\chi)^{G}\,.
  $$
   \end{proof}

   Thanks to the lemma, we can now give a useful expression for the dimension of the space of invariants we are interested in.
   
   \begin{lemme} \label{character_lowerline}
   If $\bigl(\pi\rho_\ell\otimes\overline{\pi\rho_{\ell}}\otimes\chi\phit\bigr)^{\pi_1(\bar{U},\bar{\eta})}\not= 0$, then there exists a character $\psi_\ell^\chi$ of $G_\ell$ such that we have an isomorphism between the spaces of invariants
   $$
  \bigl([\pi,\bar{\pi}]\rho_\ell\otimes\chi\phit\bigr)^{\pi_1(\bar{U},\bar{\eta})}\simeq ([\pi,\bar{\pi}]\otimes\psi_\ell^\chi)^{G_\ell^g}\,.
   $$
   Moreover the dimension of the latter space is 
   $$
  \gamma_\ell^{\pi,\chi}=|\{\psi\text{ character of } G_\ell/G_\ell^{g}\mid \pi\simeq \pi\otimes\overline{\psi_\ell^\chi}\psi \}|\,.  
   $$
   \end{lemme}

   \begin{proof}
   Assuming the space of invariants we consider is not trivial, let us fix a nonzero $\pi_1(\bar{U},\bar{\eta})$-invariant vector $v$. We have
   $$
   [\pi,\bar{\pi}](g)v=\overline{\chi\phit}(g)v,\text{ for all }g\in \pi_1(\bar{U},\bar{\eta})\,.
   $$
      
    For any $g$, we have $\chi\phit(g)\in\{\pm 1\}$ and that quantity depends only on $\rho_\ell(g)$. Indeed, if $g_1, g_2$ are such that $\rho_\ell(g_1)=\rho_\ell(g_2)$, then the above relation yields $\chi\phit(g_1)=\chi\phit(g_2)$ since we chose $v\not= 0$. That observation enables us to define a character $\psi_\ell^\chi$ of $G_\ell^g$:
    $$
    \psi_\ell^\chi(h)=\overline{\chi\phit}(g),\text{ where } h\in G_\ell^g \text{ and } g \text{ is such that } \rho_\ell(g)=h\,.
    $$
  
  Notice that such a character $\psi_\ell^\chi$ is trivial on $G_\ell^{g,V}$ simply because $\chi\phit$ is trivial on $\pi_1(\bar{V},\bar{\mu})$. Thus, we can see it as a character of $G_\ell^g/G_\ell^{g,V}$ and it can be extended to a character of $G_\ell/G_\ell^{g,V}$ by making it act trivially on the possibly ``missing'' $\Z/2\Z$ factor (this is because we have the inclusions
  $$
  \Omega(N,\F_\ell)\subset G_\ell^{g,V}\subset G_\ell^g\subset O(N,\F_\ell)\,,
  $$
  and we know $O(N,\F_\ell)/\Omega(N,\F_\ell)\simeq \Z/2\Z/\times \Z/2\Z$).

  % we are reduced to determining the dimension of the space spanned by the vectors $v$ such that, for any $g\in\pi_1(\bar{U},\bar{\eta})$,
  % $$ (\pi\rho_\ell\otimes\overline{\pi\rho_{\ell}}\otimes\chi\phit)(g)(v)=1\,.
  % $$
   
  % Now $\chi$ is a character of order at most $2$ (the group $G(V/U)$ being of exponent $2$) and $\chi\phit$ restricts trivially to $\pi_1(\bar{V},\bar{\mu})$, so there exists a character $\psi_\ell^\chi$ of $G_\ell^g/G_\ell^{g,V}$ (which also has order at most $2$) such that, for all $g\in\pi_1(\bar{U},\bar{\eta})$, we have
  % $$
  % \chi\phit(g)=\psi_\ell^\chi\rho_\ell(g)\,.
  % $$

 %Note that the character $\psi_\ell^\chi$ can easily be extended to a character of $G_\ell/G_\ell^{g,V}$: indeed that quotient is isomorphic to a product of copies of $\Z/2\Z$ and it contains, up to isomorphism, the group $G_\ell^g/G_\ell^{g,V}$, so we can define the extension of $\psi_\ell^\chi$ as being trivial on the ``missing'' $\Z/2\Z$ factors.
 %\par
\medskip
  To prove the second part of the statement we need to determine the dimension of the space spanned by the vectors $w$ satisfying
 $$
 \pi\otimes\bar{\pi}(g)(w)=\overline{\psi_\ell^\chi(g)}w\,,
 $$
 for all $g\in G_\ell^g$. Using Frobenius reciprocity once more, we can compute that dimension (equal to the multiplicity of the trivial character in the restriction of $\pi\otimes\bar{\pi}\otimes\psi_\ell^\chi$ to the group $G_\ell^g$):
 $$
 \langle \pi\otimes\bar{\pi}\otimes\psi_\ell^\chi;1\rangle_{G_\ell^g}= \langle\pi\otimes\bar{\pi}\otimes\psi_\ell^\chi;\sum_{\psi}{\psi}\rangle_{G_\ell}\,,
 $$
 where, in the last sum, $\psi$ runs over the group of characters of the abelian group $G_\ell/G_\ell^g$.
 \par
 As we have chosen $\pi\in\Pi_\ell^*$, the scalar product in the right hand side of the above equality is equal to the cardinality
 $$
 \gamma_\ell^{\pi,\chi}=|\{\psi\text{ character of } G_\ell/G_\ell^g\mid \pi\simeq\pi\otimes\overline{\psi_\ell^\chi}\psi\}|\,.
 $$
   \end{proof}
 
 \begin{remark}
 We clearly have an injection
 \begin{align} \label{injection}
 \{\psi\text{ character of } G_\ell/G_\ell^g\mid \pi\simeq\pi\otimes\overline{\psi_\ell^\chi}\psi\}&\ra \hat{\Gamma}_\ell^\pi\\ \nonumber
 \psi &\mapsto\overline{\psi_\ell^\chi}\psi\,,
 \end{align}
so that $\gamma_\ell^{\pi,\chi}\leqslant |\hat{\Gamma}_\ell^\pi|$ for any choice of $\pi\in\Pi_\ell^*$ and $\chi\in \widehat{G(V/U)}$.
 \end{remark}  
   
\par
\medskip

 To unify the notation, we set $\gamma_\ell^{\pi,\chi}=0$ if $([\pi,\bar{\pi}]\rho_\ell\otimes\chi\phit)^{\pi_1(\bar{U},\bar{\eta})}$ is trivial. The following lemma enables us to determine completely the contribution of the trace of Frobenius on $H^{2d}_c(\bar{U},\mt{F}_\chi(\pi,\pi))$.

\begin{lemme}
Provided $H^{2d}_c(\bar{U},\mt{F}_\chi(\pi,\pi))$ is non zero, there is a unique eigenvalue for the global geometric Frobenius acting on that space: it is either $q^d$ or $-q^d$.
\end{lemme}

\begin{proof}
 It is enough to prove that the global geometric Frobenius acts on the space of coinvariants $(\pi\rho_\ell\otimes\bar{\pi}\rho_\ell\otimes\chi)_{\pi_1(\bar{U},\bar{\eta})}$ as either ${\rm Id}$ or $-{\rm Id}$ (so it acts on the $d$-th Tate twist by scalar multiplication by $q^d$ or $-q^{d}$). From the arguments preceding Lemma~\ref{lem-rep-th} and Lemma~\ref{lem-rep-th} itself  we know that the space of coinvariants we are interested in is isomorphic (as a Galois module) to the space of invariants
$$
(\pi\rho_\ell\otimes\bar{\pi}\rho_\ell\otimes\chi\tilde{\phi})^{\pi_1(\bar{U},\bar{\eta})}\,.
$$

That space of invariants can be seen as a $\hat{\Z}$-module for which we want to understand the action of its topological generator $d(\Fr_u)=-1$ for any $u\in U(F_q)$ such that $\phit(\Fr_u)=\alpha$. Now from Lemma~\ref{character_lowerline} that action is the same as that of ${\rm pr}_\ell(-1,\alpha)$ on
$([\pi,\bar{\pi}]\otimes\psi_\ell^\chi)^{G_\ell^g}$. %provided the space of invariants we start with is not trivial. 
\par
 We saw in the course of the proof of Lemma~\ref{character_lowerline}, that $([\pi,\bar{\pi}]\otimes\psi_\ell^\chi)^{G_\ell^g}$ decomposes as 
 $$
 \bigoplus\{\psi\text{ character of } G_\ell/G_\ell^{g}\mid \pi\simeq\pi\otimes\overline{\psi_\ell}\psi\}\,.
 $$
 
  On each of these $1$-dimensional summands ${\rm pr}_\ell(-1,\alpha)$ acts by multiplication by $\psi({\rm pr}_\ell(-1),\alpha)$. For each $x\in Y_\ell$ and each $\psi$ involved in the above sum, we have $\psi(x)=\psi({\rm pr}_\ell(-1,\alpha))$. Now taking the trace on each side of the isomorphism defining the characters $\psi$ we focus on, we get for all $x\in Y_\ell$:
  $$
  \Tr \pi(x)=\Tr\pi(x)\overline{\psi_\ell^\chi}(x)\psi(x)\,.
  $$
  
  As we chose $\pi\in\Pi_\ell^*$, the function $\Tr\pi$ does not identically vanish on $Y_\ell$ and we get $\psi({\rm pr}_\ell(-1,\alpha))=\psi_\ell^\chi({\rm pr}_\ell(-1,\alpha))$ for all $\psi$. The result follows from the fact that $\psi_\ell^\chi$ takes its values in $\{\pm 1\}$.
  
\end{proof}

 Emphasizing the main term in the sum $W_\chi(\pi,\tau)$, we have just proven that
 $$
 W_\chi(\pi,\tau)=\delta((\ell,\pi),(\ell',\tau))\gamma_\ell^{\pi,\chi}q^d+\sum_{i=0}^{2d-1}{(-1)^i\Tr\bigl(\Fr\mid H_c^i(\bar{U},\mt{F}_\chi(\pi,\tau))\bigr)}\,.
 $$
 Now recall
 $$
 W(\pi,\tau)=\bigl(|\hat{\Gamma}_\ell^\pi||\hat{\Gamma}_{\ell'}^\tau|\bigr)^{-1/2}|G(V/U)|^{-1}\sum_{\chi\in \widehat{G(V/U)}}{\overline{\chi(\alpha)}W_\chi(\pi,\tau)}\,.
 $$
 Setting
 $$
 \eps_\pi=\frac{1}{|G(V/U)|}\sum_\chi{\bigl(\overline{\chi(\alpha)}(\gamma_\ell^{\pi,\chi}|\hat{\Gamma}_\ell^\pi|^{-1})\bigr)}\,,
 $$
 and looking back at~(\ref{injection}), we get the main term as stated in the proposition.
 
 Applying Lemma~\ref{somme_betti} we finally get the full statement of the proposition.

 %\par \medskip
 %For the proof of Proposition~\ref{estim-W} to be complete, we need to give a suitable upper bound for the sum of the right hand side of the above equality. To that end we invoke Deligne's Theorem once more to produce the upper bound:
 %$$
 %\sum_{i=0}^{2d-1}{(-1)^i\Tr\bigl(\Fr\mid H_c^i(\bar{U},\mt{F}_\chi(\pi,\tau))\bigr)}=O(q^{d-1/2}\sigma_c(\bar{U},\mt{F}_\chi(\pi,\tau))\,,
 %$$
 %where $\sigma_c(\bar{U},\mt{F}_\chi(\pi,\tau))$ is the sum over $i\in\{0,\ldots,2d\}$ of the Betti numbers $\dim H_c^i(\bar{U},\mt{F}_\chi(\pi,\tau))$, and the implied constant is absolute.
 
 %\par
 %Now, to obtain the upper bound we need for $\sigma_c(\bar{U},\mt{F}_\chi(\pi,\tau))$, we invoke~\cite[Prop. 8.9]{KoLS}. It is indeed straightforward to verify that the proof of that result still holds if the sheaf $\pi(\rho)$ of loc. cit. is replaced by the sheaf $\pi(\rho)\otimes\chi$ where $\chi$ is a character of order $2$ of $\pi_1(U,\bar{\eta})$ and if we keep the same assumptions for $\pi$ and $\rho$. This completes the proof of Proposition~\ref{estim-W}.

 \end{proof}

 Thanks to Proposition~\ref{estim-W}, we can deduce an upper bound for the large sieve constant $\Delta$ that in turn yields the following result:
 
 \begin{corollaire} \label{up-bound-delta}
 With the same assumptions as in Proposition~\ref{estim-W}, let us denote $d'=N(N-1)/2$ the dimension of ${\bf O}(N)$ as an algebraic group, then we have
 $$
 |\{u\in U(\F_q)\mid \rho_\ell(\Fr_u)\not\in\Theta_\ell\text{ for all } \ell\in \Lambda\}|\leqslant |G(V/U)|(q^d+Cq^{d-1/2}(L+1)^A)H^{-1}\,,
 $$
 where, 
 \begin{itemize}
 \item In case $(1)$ of Proposition~\ref{estim-W}, $A=7d'/2+1$ and $C$ depends only on $\bar{U}$.
 \item In case $(2)$ of Proposition~\ref{estim-W}, $A=3d'/2+1$ and $C$ depends only on the Euler-Poincar\'e  characteristic of $\bar{U}$ and on the system $(\tilde{\mt{F}}_\ell)$ on $\bar{U}$.
 \end{itemize}
 \end{corollaire}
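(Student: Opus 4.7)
The plan is to feed the bounds of Proposition~\ref{estim-W} into the large sieve inequality~(\ref{ls-ineg}), and to control the resulting representation-theoretic quantities using only the embedding $G_\ell\subset O(N,\F_\ell)$.

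\emph{Step 1: reduction to the sieve bound.} Since every $u\in U(\F_q)$ satisfies $\phit(\Fr_u)=\alpha$ for a unique $\alpha\in G(V/U)$, the set under consideration decomposes as a disjoint union
\[
\{u\in U(\F_q)\mid \rho_\ell(\Fr_u)\notin\Theta_\ell\text{ for all }\ell\in\Lambda\}=\bigsqcup_{\alpha\in G(V/U)}S(X_\alpha,\Theta,L).
\]
Summing~(\ref{ls-ineg}) over $\alpha$ yields the factor $|G(V/U)|$ in the conclusion and reduces the problem to bounding the large sieve constant $\Delta$, uniformly in $\alpha$, by $q^d+Cq^{d-1/2}(L+1)^A$.

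\emph{Step 2: splitting $\Delta$.} I would write $\Delta\leqslant\max_\ell\max_\pi\sum_{\ell'\leqslant L}\sum_{\tau\in\Pi_{\ell'}^*}|W(\pi,\tau)|$ and separate the diagonal $(\ell',\tau)=(\ell,\pi)$ from the rest. By Proposition~\ref{estim-W}, the diagonal term equals $\eps_\pi q^d\leqslant q^d$, while the off-diagonal contributions are bounded by the corresponding error term of the proposition in each of the two cases.

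\emph{Step 3: bounding the off-diagonal sum.} The crude bound $|G_\ell|\leqslant|O(N,\F_\ell)|\ll\ell^{d'}$, together with Cauchy--Schwarz applied to $\sum_\tau(\dim\tau)^2=|G_{\ell'}|$, gives
\[
\dim\pi\leqslant|G_\ell|^{1/2}\ll\ell^{d'/2},\qquad \sum_{\tau\in\Pi_{\ell'}^*}\dim\tau\leqslant\bigl(|\Pi_{\ell'}^*||G_{\ell'}|\bigr)^{1/2}\ll \ell'^{d'},
\]
and $|G_{\ell,\ell'}|\leqslant|G_\ell||G_{\ell'}|\ll(\ell\ell')^{d'}$. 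In case (1) of Proposition~\ref{estim-W}, the off-diagonal contribution to $\Delta$ is at most
\[
Cq^{d-1/2}|G(V/U)|\cdot \ell^{3d'/2}\sum_{\ell'\leqslant L}\ell'^{2d'}\ll q^{d-1/2}L^{3d'/2}\cdot L^{2d'+1}=q^{d-1/2}L^{7d'/2+1},
\]
while in case (2) the simpler error $q^{1/2}(\dim\pi)(\dim\tau)$ yields
\[
Cq^{1/2}\cdot\ell^{d'/2}\sum_{\ell'\leqslant L}\ell'^{d'}\ll q^{1/2}L^{d'/2}\cdot L^{d'+1}=q^{1/2}L^{3d'/2+1},
\]
recovering the exponents $A=7d'/2+1$ and $A=3d'/2+1$ respectively. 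The main, though essentially technical, obstacle is keeping track of the normalization factors $(|\hat{\Gamma}_\ell^\pi||\hat{\Gamma}_{\ell'}^\tau|)^{-1/2}\leqslant 1$ appearing in~(\ref{W-expsum}) and checking that no hidden dependence on $L$ enters through them; the rest is routine bookkeeping.
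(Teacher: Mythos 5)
Your proposal is correct and follows essentially the same route as the paper: decompose $U(\F_q)$ into the fibers $X_\alpha$ to pick up the $|G(V/U)|$ factor, apply~(\ref{ls-ineg}) and Proposition~\ref{estim-W}, then bound the off-diagonal sum using $\dim\pi\leqslant|G_\ell|^{1/2}$, $\sum_\tau\dim\tau\leqslant|G_{\ell'}|$, $|G_{\ell,\ell'}|\leqslant|G_\ell||G_{\ell'}|$ and $|G_\ell|\leqslant|O(N,\F_\ell)|\leqslant(\ell+1)^{d'}$. The only cosmetic difference is your Cauchy--Schwarz detour for $\sum_\tau\dim\tau$, where the paper uses the trivial $\sum_\tau\dim\tau\leqslant\sum_\tau(\dim\tau)^2=|G_{\ell'}|$ directly, and your closing worry about the normalisation factors $(|\hat{\Gamma}_\ell^\pi||\hat{\Gamma}_{\ell'}^\tau|)^{-1/2}$ is a non-issue since Proposition~\ref{estim-W} already states its bound for the normalised sums.
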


\begin{proof}
 Let us fix $\alpha\in G(V/U)$. Combining~(\ref{ls-ineg}) and Proposition~\ref{estim-W}, we get, in case $(1)$ of Proposition~\ref{estim-W},
 $$
 |S(X_\alpha,\Theta;L)|\leqslant \max_{\ell,\pi}\Bigl\{q^d+Cq^{d-1/2}(\dim \pi)\sum_{\ell',\tau}{(\dim\tau)|G_{\ell,\ell'}|}\Bigr\}H^{-1}\,,
 $$
 whereas, in case $(2)$ of Proposition~\ref{estim-W},
 $$
 |S(X_\alpha,\Theta;L)|\leqslant \max_{\ell,\pi}\Bigl\{q^d+Cq^{d-1/2}(\dim \pi)\sum_{\ell',\tau}{(\dim\tau)}\Bigr\}H^{-1}\,.
 $$
 
  Now we can use the following trivial inequalities
  $$
  \dim\pi\leqslant\sqrt{|G_\ell|}\,,\indent \sum_{\tau\, { \rm irr}}{\dim\tau}\leqslant |G_{\ell'}|\,,\indent |G_{\ell,\ell'}|\leqslant |G_\ell||G_{\ell'}|\,,
  $$
  where $\pi$ is any irreducible representation of $G_\ell$ and $\tau$ runs over a set of representatives for the isomorphism classes of irreducible representations of $G_{\ell'}$. Doing so, we eventually get the upper bound:
  $$
   |S(X_\alpha,\Theta;L)|\leqslant (q^d+Cq^{d-1/2}(L+1)^A)H^{-1}\,,
  $$
  since we have
  $$
  |G_\ell|\leqslant |O(N,\F_\ell)|\leqslant (\ell+1)^{N(N-1)/2}\,.
  $$
  
   To conclude the proof, we need only remark:
  $$
  |\{u\in U(\F_q)\mid \rho_\ell(\Fr_u)\not\in\Theta_\ell, \text{ for all } \ell\leqslant L\}|=\sum_{\alpha\in G(V/U)}{|S(X_\alpha,\Theta;L)|}\,.
  $$
  
   Thus the left hand side of the above equality can be bounded by
   $$
  |G(V/U)|\max_{\alpha\in G(V/U)}{|S(X_\alpha,\Theta;L)|}\,.
   $$
    
    Combining that with the upper bound for $|S(X_\alpha, \Theta;L)|$ we get the estimate we wanted.
\end{proof}

\begin{remark}
 It seems reasonable to hope that for any representation $\pi\in\Pi_\ell^*$ and any nontrivial $\chi$, we have $\gamma_\ell^{\pi,\chi}=0$. As we trivially see that $\gamma_\ell^{\pi,1}=|\hat{\Gamma}_\ell^\pi|$, then this would enable us to set $\eps_\pi=|G(V/U)|^{-1}$ for any $\pi$, which would be very useful both to save a power of $2$ in the implied constant of Theorem~\ref{quant-katz} and for uniformity purposes. Indeed, without that kind of additional data, we cannot get rid of the constant $|G(V/U)|$ (i.e. of the dependency on the degree of the cover $V\ra U$) in corollary~\ref{up-bound-delta}.
\end{remark}
 
  To deduce Theorem~\ref{estim_irred} from the above corollary, we need to give a lower bound for the constant $H$. As we are interested in uniformity issues (i.e. we would like the implied constant to be an absolute constant in our estimate, at least in the case where $E$ is a Legendre curve), that lower bound needs to be explicit in terms of the common degree $N$ of the $L$-functions studied.

 \section{Statement and proof of the main result}\label{main-th}

 Applying the sieving machinery of the preceding section, we can prove a more general version of Theorem~\ref{estim_irred} in the context of Section~\ref{quant-katz}. Indeed, beyond the question of the irreducibility of $L_{{\rm red},c}$ when averaging over $c$, we can investigate the question of the maximality of the Galois group of such a $\Q$-polynomial.

\subsection{A stronger version of Theorem~\ref{quant-katz}}
 First, we need to understand why the setup of Section~\ref{quant-katz} fits the abstract geometric framework of Section~\ref{geom-setup}. This is mostly due to Hall (see~\cite{H}). The parameter variety we are interested in is the affine curve $U_g$ of Section~\ref{quant-katz}, where $g$ is a fixed element of $F_{d-1}$. We restrict ourselves to the curve $U_g$ because we cannot guarantee that the condition $p\nmid |G_\ell^{g,V}|$ holds, as would be required by $(1)$ of Proposition~\ref{estim-W} if we wanted to work with the variety $F_d$.
 \par
 With notation as in Section~\ref{quant-katz}, there is, for each $\ell$ a unique lisse $\F_\ell$-sheaf denoted $\mt{T}_{d,\ell}$ whose fiber over any $f\in F_d(\F_q)$ is $\mt{V}_{f,\ell}$. That sheaf can be restricted to the curve $U_g$ and this gives rise, for a geometric point $f(t)=(c-t)g(t)\in F_d(\overline{\F_q})$ to a representation:
 $$
 \pi_1(U_g,\bar{c})\ra GL(\mt{V}_{f,\ell})\,,
 $$
 where $\bar{c}$ is a geometric point over $c$.
 
 \par
 \medskip
  From what we saw at the end of Section~\ref{quad-twist}, we know that the image of that representation sits in the orthogonal group $O(\mt{V}_{f,\ell})$ (with respect to the symmetric pairing given by Poincar\'e duality). That means both the arithmetic and the geometric monodromy groups of $\mt{T}_{d,\ell}$ are subgroups of $O(\mt{V}_{f,\ell})$. As explained by Hall in~\cite[Section 2]{H}, the system of sheaves $(\mt{T}_{d,\ell})_\ell$ restricted to $U_g$ forms a compatible system in the sense of Definition~\ref{compatible-sys}. Indeed we have
  $$
  L_{\rm red}(E_f/K;T)\equiv \det(1-T\Fr_q\mid \mt{V}_{f,\ell})\,(\mathrm{mod}\,\ell)\,,
  $$ 
 which of course is a crucial identity for us as we want to study $L_{\rm red}(E_f/K;T)$ via sieve methods.
 
 \par
 \medskip
 
  Finally we need to understand why there exists an \'etale Galois cover $V_g\ra U_g$ satisfying the properties stated in Section~\ref{geom-setup}. That is in fact a reformulation of Hall's Theorem $6.6$ in~\cite{H}. The following lemma, which was explained to the author by C. Hall, provides us with a precise version of the property we need.

\begin{lemme}\label{lemme-chris}
 With notation as above,  if we assume that $d=\deg g+1\geqslant d_0(E)$ and $\ell\geqslant \ell_0(E)$, then there exists an \'etale Galois cover $V_g\ra U_g$, with Galois group $G(V_g/U_g)$ an elementary $2$-group, such that, if we denote by $\rho_\ell$ the representation which ``is'' $\mt{T}_{d,\ell}$, we have
$$
\rho_\ell(\pi_1(\overline{V_g},\bar{\mu}))=\Omega(N,\F_\ell)\,,
$$ 
and
$$
\rho_{\ell,\ell'}(\pi_1(\overline{V_g},\bar{\mu}))=\Omega(N,\F_\ell)\times\Omega(N,\F_{\ell'})\,,
$$
for $\ell\not=\ell'$.

\par
\medskip
 Moreover we can choose
$$
\ell_0(E)=\max\bigl(13,\max\{p'\text{ prime}\mid v_{p'}(-{\rm ord}_P(j(E)))>0\text{ for some } P\in \F_q[t]\text{ irreducible}\}\bigr)\,.
$$
\end{lemme}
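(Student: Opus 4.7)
The plan is to deduce this lemma from Hall's Theorem 6.6 in~\cite{H}, applied to the pull-back of $\mt{T}_{d,\ell}$ to $U_g$, combined with a Goursat-style argument to handle the joint monodromy statement for two distinct primes. First I would invoke Hall's big-monodromy theorem: under the hypotheses $d\geqslant d_0(E)$ and $\ell\geqslant \ell_0(E)$, the geometric monodromy group $G_\ell^g:=\rho_\ell(\pi_1(\bar U_g,\bar\mu))$ contains the commutator subgroup $\Omega(N,\F_\ell)$. Since $\Omega(N,\F_\ell)$ is normal in $O(N,\F_\ell)$ with quotient $(\Z/2\Z)^2$ generated by the determinant and the spinor norm, the image of $G_\ell^g$ in $O(N,\F_\ell)/\Omega(N,\F_\ell)$ is an elementary abelian $2$-group of order dividing $4$.

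The second and core step is to produce an \'etale cover $V_g\to U_g$ with elementary abelian $2$-group Galois group, chosen \emph{independently} of $\ell$, such that pulling back kills this quotient. Following Hall's method, this cover is cut out by the two characters $\det\circ\rho_\ell$ and $\mathrm{sn}\circ\rho_\ell$ with values in $\{\pm 1\}$, which can be identified explicitly in terms of square roots of certain polynomial discriminants built from $g$, $m$, and the divisors $M_f$, $A_f$ of multiplicative and additive reduction attached to $E$. The key point, which is essentially the content of Hall's explicit computation, is that these characters are $\ell$-independent once $\ell$ avoids the exceptional primes appearing in the definition of $\ell_0(E)$: the bound ``$13$'' ensures that $\Omega(N,\F_\ell)$ is quasi-simple and that its coset structure inside $O(N,\F_\ell)$ does not degenerate, while the condition $\ell\neq p'$ for $p'\mid \mathrm{ord}_P(j(E))$ guarantees that the tame local inertia characters at the multiplicative reduction fibers reduce well modulo $\ell$. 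With this cover in hand, $\rho_\ell(\pi_1(\bar V_g,\bar\mu))=\Omega(N,\F_\ell)$ by construction.

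For the product statement, I would set $H:=\rho_{\ell,\ell'}(\pi_1(\bar V_g,\bar\mu))\subseteq \Omega(N,\F_\ell)\times\Omega(N,\F_{\ell'})$ and apply Goursat's lemma. The first step implies that $H$ surjects onto each factor, so $H$ is the fiber product of the two factors over a common quotient $Q$. For $N\geqslant 5$ and $\ell\geqslant\ell_0(E)$, the group $\Omega(N,\F_\ell)$ is quasi-simple with unique non-abelian simple quotient $P\Omega(N,\F_\ell)$; since $|P\Omega(N,\F_\ell)|\neq|P\Omega(N,\F_{\ell'})|$ whenever $\ell\neq\ell'$, the two factors share no common nontrivial quotient, forcing $Q=1$ and hence $H=\Omega(N,\F_\ell)\times\Omega(N,\F_{\ell'})$. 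The main obstacle is the second step: tracking the determinant and spinor-norm characters carefully enough through Hall's construction to exhibit an $\ell$-independent cover $V_g$ and to extract the explicit bound on $\ell_0(E)$; once this is established, both the single-prime conclusion and the Goursat reduction are essentially formal.
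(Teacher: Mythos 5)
Your overall architecture matches the paper's proof closely: invoke Hall's big-monodromy theorem to get $\Omega(N,\F_\ell)\subset G_\ell^g$, construct a finite elementary abelian $2$-cover $V_g\to U_g$ killing the quotient $G_\ell^g/\Omega(N,\F_\ell)$, and use a Goursat argument for the linear disjointness. Your Goursat step (uniqueness of the non-abelian simple quotient $P\Omega(N,\F_\ell)$, whose isomorphism type varies with $\ell$) is correct and is exactly what is meant in the paper when it invokes Goursat--Ribet together with $\Omega(N,\F_\ell)$ having no nontrivial abelian quotient. Where you diverge is in the construction of $V_g$: the paper defines $V_g$ as the \emph{compositum over all $\ell$} of the subcovers $W_{\ell,g}\to U_g$ corresponding to $\Omega(N,\F_\ell)\subset G_\ell$, and then applies Goursat--Ribet a first time to show that the covers $V_g\to W_{\ell,g}$ and $U_{\ell,g}\to W_{\ell,g}$ are disjoint, so that $\rho_\ell(\pi_1(\bar V_g,\bar\mu))$ is still all of $\Omega(N,\F_\ell)$. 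You instead propose to produce $V_g$ directly from the determinant and spinor-norm characters and argue that these are $\ell$-independent. That observation, if established, would actually strengthen the paper's argument: it is what guarantees that the paper's compositum is a \emph{finite} cover (a point the paper leaves implicit), and it would render the first Goursat--Ribet application unnecessary. You are right that this $\ell$-independence is the real content and that it is not verified in your sketch; you acknowledge this, and it is indeed the heart of the matter.

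There is however one genuine error in your account of $\ell_0(E)$. You attribute the bound ``$13$'' to group-theoretic degeneracies of $\Omega(N,\F_\ell)$ (quasi-simplicity, coset structure). That is not where it comes from. In the paper, $\ell\geqslant 13$ is extracted from the theorem of Cojocaru and Hall~\cite{CH} giving a uniform version of Serre's open image theorem for elliptic curves over function fields: the condition that needs to hold is that $\Gal(K(E_f[\ell])/K)\supset SL(2,\F_\ell)$, and~\cite[Th.~1]{CH} shows this holds for all $\ell\geqslant 13$ when the base function field has genus $0$. Quasi-simplicity of $\Omega(N,\F_\ell)$ holds already for $N\geqslant 5$ and tiny $\ell$, so it is not the source of the constant. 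Your reading of the other part of $\ell_0(E)$ — that $\ell$ must not divide the multiplicity of any pole of $j(E)$, so that tame inertia behaves — is correct and matches the remark following Hall's Lemma $6.1$.
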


\begin{proof}
 Let $U_{\ell,g}\ra U_g$ be the cover with Galois group $G_\ell$. Because $\mt{T}_{d,\ell}$ (or $\rho_\ell$) has big geometric monodromy for $d$ and $\ell$ large enough (see~\cite[Th. 6.3]{H}), the group $G_\ell^g$ (which is a subgroup of $G_\ell$) contains the derived group $\Omega(N,\F_\ell)$ of the orthogonal group as soon as $\ell$ is sufficiently large. If $W_{\ell,g}\ra U_g$ is the subcover corresponding to $\Omega(N,\F_\ell)$ then $W_{\ell,g}\ra U_g$ is Galois with group a subgroup of $\Z/2\Z\times \Z/2\Z$. Thus if we define $V_g$ to be the compositum of these covers, the cover $V_g\ra U_g$ we get is still Galois with group an elementary $2$-group. Now $\Omega(N,\F_\ell)$ has no non trivial abelian quotient (see e.g.~\cite[Prop. 10]{D}), so applying Goursat-Ribet's Theorem (see e.g.~\cite[Prop. 5.1]{Ch}), we deduce that the covers $U_{\ell,g}\ra W_{\ell,g}$ and $V_g\ra W_{\ell,g}$ are disjoint; in particular, if $V_{\ell,g}$ is the compositum of $V_g\ra W_{\ell,g}$ and $U_{\ell,g}\ra W_{\ell,g}$, then the Galois group of $V_{\ell,g}\ra W_{\ell,g}$ is that of $U_{\ell,g}\ra W_{\ell,g}$ which is precisely $\Omega(N,\F_\ell)$. In particular, as $\Omega(N,\F_\ell)$ has no non trivial abelian quotient, we deduce $\rho_\ell(\pi_1(\bar{V}_g,\bar{\mu}))=\Omega(N,\F_\ell)$.

\par
 \medskip
 The statement of linear disjointness follows by invoking once more Goursat-Ribet's Theorem.

\par
\medskip
 As far as the minimal value for $\ell$ is concerned, we exploit the remark following~\cite[Lemma $6.1$]{H}. Besides the fact that $\ell\geqslant 5$, there are two conditions needed for Hall's Theorem to apply: $\ell$ should not divide the multiplicity of any irreducible polynomial in $\F_q[t]$ appearing as a factor of the denominator of the $j$-invariant of $E$, and if $f(t)=(c-t)g(t)$ then the Galois group of the torsion field $K(E_f[\ell])$ over $K$ contains $SL(2,\F_\ell)$.
\par
 This last condition can be made explicit thanks to a result due to A. Cojocaru and C. Hall (see~\cite[Th. 1]{CH}) about the uniformity (in the function field case) of Serre's result on the surjectivity of mod $\ell$ Galois representations of elliptic curves. Their Theorem gives a very explicit lower bound for the first prime $\ell$ for which surjectivity holds, and this bound depends only on the genus of the function field over which the elliptic curve considered is defined. Applying their formula in our case (i.e. in the case of a rational function field), we obtain that the surjectivity property holds as soon as $\ell\geqslant 13$.

\end{proof}

 Now Corollary~\ref{up-bound-delta} may be applied in the framework of Section~\ref{quad-twist}, that means with $U=U_g$ and $\rho_\ell$ corresponding for each $\ell$ to the restriction of $\mt{T}_{d,\ell}$ to $U_g$. For a suitable choice of sieving sets $\Theta_\ell$ we might then obtain a proof of Theorem~\ref{estim_irred}, provided we can produce a good enough uniform lower bound for $H$. However we can do better than just investigating the irreducibility of $L$-functions when averaging over $U_g(\F_q)$. Indeed as noticed by Kowalski in~\cite[8.6]{KoLS} and as we did in~\cite{J} in the context of random walks on integral points of orthogonal groups, we can get quantitative information on the maximality of the Galois group of the $\Q$-polynomial $L_{\rm red}(E_c/K;T)$, when $c$ runs over the set of $\F_q$-rational points of $U_g$.
\par
 First, the functional equation~(\ref{eq_fonc}) imposes the Galois group of $L_{\rm red}(E_c/K;T)$ to be strictly smaller than the full symmetric group $\mathfrak{S}_{N_{\rm red}}$. Indeed that Galois group acts on pairs of roots $\{\alpha_i,\alpha_{N_{\rm red}/2}\}$ for $1\leqslant i\leqslant N_{\rm red}/2+i$. We deduce that the maximal Galois group that the $\Q$-polynomial $L_{\rm red}(E_c/K;T)$ can have is the group denoted $W_{N_{\rm red}}$. This group coincides with the Weyl group of the algebraic group ${\bf O}(N)$. That is an instance of a much more general principle according to which the Galois group of the characteristic polynomial of an integral point of any reductive group ${\bf G}/\Q$ should ``generically'' coincide with the Weyl group of ${\bf G}$ (at least if $\G/\Q$ is split). Such a question is investigated in~\cite{JKZ} in the case of a split model over $\Q$ of the exceptional group ${\bf E}_8$, in~\cite[Chap. 7]{KoLS} for ${\bf SL}(n)$ and ${\bf Sp}(2g)$ and in~\cite{J} for the orthogonal group with respect to an indefinite quadratic form.
\par
 As a consequence, it seems fair to say that the $\Q$-polynomial $L_{\rm red}(E_c/K;T)$ has \emph{small Galois group} if the Galois group of its splitting field over $\Q$ is strictly contained in $W_{N_{\rm red}}$. Now the generalized version of Theorem~\ref{estim_irred} we shall prove is the following:

 \begin{theorem} \label{estim_petitgalois}
   With notation as in Section~\ref{quant-katz}, let $L_{{\rm red},c}$ denote the reduced $L$-function of the quadratic twist $E_c$ of $E$. For $N\geqslant 5$, $d=\deg(g)+1\geqslant d_0(E)$, and $q\geqslant q_0(E)$, we have 
    $$
    |\{c\in \A^1(\F_q)\mid g(c)m(c)\not=0 \text{ and }  L_{{\rm red},c}\text{ has small Galois group }\}|\ll N^2|G(V_g/U_g)|q^{1-\gamma}\log q\,,
    $$
  where the implied constant depends only on $j(E)$ (e.g. it does not depend on $N$, and $q$ could be replaced by $q^n$ for any $n\geqslant 1$), and where we can choose $2\gamma^{-1}=7N^2-7N+4$.
   \end{theorem}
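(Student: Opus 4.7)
The plan is to apply Corollary~\ref{up-bound-delta} in case~$(1)$ of Proposition~\ref{estim-W} to the curve $U = U_g$, the cover $V = V_g$ furnished by Lemma~\ref{lemme-chris}, and the representations $\rho_\ell$ of $\pi_1(U_g,\bar\eta)$ afforded by $\mt{T}_{d,\ell}|_{U_g}$. The form of the stated exponent, $2\gamma^{-1} = 7N^2-7N+4$, matches $A = 7d'/2+1$ with $d' = N(N-1)/2 = \dim \mathbf{O}(N)$, i.e.\ case~$(1)$. To activate that case I restrict the sieve support $\Lambda$ to primes $\ell\leq L$ that are at least $\ell_0(E)$ and lie in a residue class modulo $p$ for which $p\nmid |\Omega(N,\F_\ell)|$ (such residues have positive density once $q \geq q_0(E)$), so that the order of $G_\ell^{g,V}\supseteq \Omega(N,\F_\ell)$ is prime to $p$; the prime counts below are unaffected up to an absolute constant.

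The central task is to encode the event ``$L_{{\rm red},c}$ has small Galois group'' as a family of conditions on the local Frobenii $\rho_\ell(\Fr_c)$. Writing $W := W_{N_{\rm red}}$ for the hyperoctahedral Weyl group, the functional equation~(\ref{eq_fonc}) forces $G_c := \Gal(L_{{\rm red},c}/\Q)\subseteq W$, and smallness of $G_c$ means $G_c\subseteq M$ for some maximal subgroup $M\leq W$. There are only polynomially-in-$N$ many $W$-conjugacy classes of such $M$; this accounts for the $N^2$ prefactor in the final estimate, so it suffices to bound, uniformly in $M$, the cardinality of $\{c : G_c\subseteq M\}$. The compatibility identity
$$
L_{{\rm red}}(E_c/K;T) \equiv \det(1-T\Fr_q\mid \mt{V}_{c,\ell}) \pmod{\ell}
$$
equates the factorization pattern of $L_{{\rm red},c}$ modulo $\ell$ with the eigenvalue cycle type of $\rho_\ell(\Fr_c)$; combined with the standard identification of Frobenius cycle types with conjugacy classes in $G_c$, this forces the cycle type of $\rho_\ell(\Fr_c)$ to be realized in some $W$-conjugate of $M$ whenever $G_c\subseteq M$. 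I accordingly set
$$
\Theta_\ell^M = \{\, g\in Y_\ell \,:\, \text{the eigenvalue cycle type of } g \text{ is realized in no $W$-conjugate of } M\,\};
$$
then $G_c\subseteq M$ forces $\rho_\ell(\Fr_c)\notin \Theta_\ell^M$ for every $\ell\in \Lambda$ with $\ell\leq L$, and $\Theta_\ell^M$ is plainly conjugation-invariant.

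The main obstacle is the $\ell$-uniform lower bound $|\Theta_\ell^M|/|G_\ell^{g,V}|\geq c_M>0$ for each maximal subgroup $M$: because $\mathbf{O}(N)$ is disconnected and $\mathbf{SO}(N)$ is not simply connected, this does not reduce to a single ``generic element'' count as in the symplectic setting of~\cite{KoCrelle}, but must be extracted, one class of $M$ at a time, from the cycle-type statistics for $\Omega(N,\F_\ell)$ developed in the author's earlier work~\cite{J}. Granting such lower bounds, the prime number theorem in arithmetic progressions yields $H_M\gg L/\log L$, and Corollary~\ref{up-bound-delta} produces
$$
|\{c\in U_g(\F_q): G_c\subseteq M\}| \ll |G(V_g/U_g)|\bigl(q + q^{1/2}(L+1)^A\bigr)\frac{\log L}{L}.
$$
Choosing $L = q^\gamma$ with $\gamma = 1/(2A) = 2/(7N^2-7N+4)$ balances the two terms inside the parentheses and gives the bound $\ll |G(V_g/U_g)|\,q^{1-\gamma}\log q$ for a single $M$; summing over the $O(N^2)$ classes of maximal subgroups then yields the theorem.
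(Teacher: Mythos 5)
Your proposal diverges from the paper's argument at two points, both of which create genuine gaps.

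First, you invoke case~$(1)$ of Proposition~\ref{estim-W} and propose to make it applicable by restricting $\Lambda$ to primes $\ell$ with $p\nmid|\Omega(N,\F_\ell)|$, asserting that such primes exist in positive density. This fails: $|\Omega(N,\F_\ell)|$ is divisible by $\ell^{2i}-1$ for $1\leqslant i\leqslant \lfloor N/2\rfloor-1$, and Fermat's little theorem gives $p\mid\ell^{p-1}-1$ for every prime $\ell\neq p$. Since $p$ is odd, $p-1$ is even, so whenever $p-1\leqslant N-2$ (which holds once $N$ is large enough, and $N\to\infty$ with $d$ while $p$ is fixed) \emph{every} prime $\ell\neq p$ satisfies $p\mid|\Omega(N,\F_\ell)|$ and your sieve support is empty. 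The paper anticipates exactly this obstacle: the stated reason for working with the curve $U_g$ rather than the full variety $F_d$ is that $p\nmid|G_\ell^{g,V}|$ cannot be guaranteed. On the curve, the system $(\mt{T}_{d,\ell})_\ell|_{U_g}$ is compatible in the sense of Definition~\ref{compatible-sys} (Hall's observation), so it is case~$(2)$ of Corollary~\ref{up-bound-delta} that the paper actually applies, and the $p$-divisibility issue never arises. (You matched the exponent $2\gamma^{-1}=7N^2-7N+4$ to case $(1)$'s $A=7d'/2+1$; the arithmetic is right, but this appears to be a slip in the paper's final computation rather than evidence that case $(1)$ is the intended route --- the surrounding text unambiguously sets up case~$(2)$.)

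Second, your sieving sets $\Theta_\ell^M$, one per $W$-conjugacy class of maximal subgroup $M<W_{N_{\rm red}}$, require $\ell$- and $N$-uniform lower bounds on $|\Theta_\ell^M|/|\Omega(N,\F_\ell)|$ across the entire lattice of maximal subgroups, and you grant these without proof. Those density bounds \emph{are} the hard quantitative content of the theorem, and obtaining them uniformly over all maximal $M$ in the hyperoctahedral group is substantially more demanding than what the paper needs. The paper instead invokes the group-theoretic criterion of~\cite[Lemma~7.1(iii)]{KoCrelle}, which isolates four conjugacy classes of $W_{N_{\rm red}}$ whose simultaneous presence in a subgroup forces that subgroup to be all of $W_{N_{\rm red}}$; this reduces the problem to four explicit families $\Theta^{(1)},\dots,\Theta^{(4)}$ of local conditions on reduced characteristic polynomials. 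The required densities are then supplied by Lemma~\ref{estim-theta}, which draws on the counting results of~\cite{J} and~\cite{Ka}. The $N^2$ prefactor in the final estimate comes from the weakest of those four bounds, $|\Theta_\ell^{(1)}|/|\Omega(N,\F_\ell)|\geqslant 1/(4N^2)$, feeding into $H^{-1}\ll N^2(\log L)/L$ --- not, as in your account, from counting conjugacy classes of maximal subgroups.
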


 Besides the large sieve inequality contained in the statement of Corollary~\ref{up-bound-delta}, the other ingredient we need in order to prove Theorem~\ref{estim_petitgalois} consists in a suitable choice of families $\Theta$ and, for each such family, a good enough uniform lower bound for the constant $H$. As the Galois groups we are investigating are subgroups of the Weyl group $W_{N_{\rm red}}$, the choice for the families $\Theta$ is almost the same as in our previous work~\cite[Lemma 20]{J}. The only thing that changes here is the number of left cosets of $O(N,\F_\ell)$, with respect to $\Omega(N,\F_\ell)$, that we consider. Indeed, in loc. cit., we were only working with matrices whose determinant was $1$, namely elements of $SO(n,m)(\Z)$. In that case the spinor norm is the only invariant that discriminates a left coset from another. Here we also have to handle the cosets given by $(\det=-1, \nsp=\pm 1)$. To describe the families $\Theta$ we choose, we need first to recall a few notations from~\cite{J}: we consider the set of polynomials
 $$
 M_{N,\ell}=\Bigl\{1+b_1T+\cdots+b_NT^N\mid b_i\in\F_{\ell}, b_N^2=1 \,\, \text{and}\,\, b_{N-i}=b_Nb_i,\,\, \text{if}\,\,0\leqslant i\leqslant \lfloor N/2\rfloor\Bigr\}\,.
 $$
 
  For a fixed left coset $\alpha_\ell G_\ell^{g,V}\subset O(N,\F_\ell)$, the integer $N_{\rm red}$ is well defined as the degree of the reduced characteristic polynomial of any matrix in that coset in the sense of~(\ref{red-char}), namely 
 $$
 N_{\rm red}=\begin{cases}  &N \text{ if } N \text{ is even and } \det \alpha_\ell=1\,,\\
                             &N-2 \text{ if } N \text{ is even and } \det \alpha_\ell=-1\,,\\
                         &N-1 \text{ if } N \text{ is odd }. \end{cases}     
 $$

 Thus we can define a family of sieving sets $(\Theta_\ell)_\ell$ such that $\Theta_\ell\subset Y_\ell=\alpha_\ell G_\ell^{g,V}$, via the formula
  $$
\Theta_\ell=\{g\in O(N,\F_\ell)\mid (\det, \nsp)(g)=(\eps_\ell^{(1)},\eps_\ell^{(2)}),\,\det(1-Tg)_{\rm red}\in \tilde{\Theta}_\ell\}\,,
 $$
 where $\tilde{\Theta}_\ell$ is for each $\ell$ a fixed subset of $M_{N_{\rm red},\ell}$.
 
 \par
 \medskip
  Now our choice for families of sieving sets corresponds to the following sets of polynomials and values for $\eps_\ell^{(1)}, \eps_\ell^{(2)}$.

 \begin{enumerate}
 \item The set $\Theta_\ell^{(1)}$ corresponds to the set of polynomials $\tilde{\Theta}^{(1)}_\ell$
             \begin{itemize}\item which are either irreducible \emph{if N is odd or if $N$ is even and $\eps_\ell^{(1)}=-1$}, or irreducible with a fixed value modulo nonzero squares of $\F_\ell$ at $-1$ and satisfy $\disc(f)=\disc(Q)$ \emph{if $N$ is even, $\eps_\ell^{(1)}=1$, and $O(N_{\rm red},\F_\ell)=O(N,\F_\ell)$ is nonsplit},
                             \item which factor as a product of two distinct monic irreducible polynomials of degree $N_{\rm red}/2$ \emph{if $\eps_\ell^{(1)}=1$, $O(N_{\rm red},\F_\ell)$ is split and $\ell\equiv 1\,(\mathrm{mod}\,4)$},
                             \item which factor as a product of an irreducible monic quadratic polynomial and an irreducible polynomial of degree $N_{\rm red}-2$ \emph{if $\eps_\ell^{(1)}=1$, $O(N_{\rm red},\F_\ell)$ is split and $\ell\equiv 3\,(\mathrm{mod}\,4)$}.    
              \end{itemize}
  \item Let $\tilde{\Theta}_\ell^{(2)}$ be the set of polynomials $f$ in $M_{N_{\rm red},\ell}$ with a fixed value modulo nonzero squares of $\F_\ell$ at $-1$, which satisfy $\disc(f)=\disc(Q)$ and which factor as a product of a monic quadratic polynomial with distinct monic irreducible polynomials of odd degree.
  \item Let $\tilde{\Theta}_\ell^{(3)}$ be the set of polynomials $f$ in $M_{N_{\rm red},\ell}$ with a fixed value modulo nonzero squares of $\F_\ell$ at $-1$, which satisfy $\disc(f)=\disc(Q)$ and with associated polynomial $h$ (such that $f=x^nh(x+x^{-1})$) being separable with at least one factor of prime degree $>N_{\rm red}/4$.   
  \item Let $\tilde{\Theta}_\ell^{(4)}$ be the set of polynomials $f$ in $M_{N_{\rm red},\ell}$ with a fixed value modulo nonzero squares of $\F_\ell$ at $-1$, which satisfy $\disc(f)=\disc(Q)$ and with associated polynomial $h$ being separable with one irreducible quadratic factor and no other irreducible factor of even degree.                          
  \end{enumerate}
 
  The above choices are justified by~\cite[Lemma $7.1(iii)$]{KoCrelle} and the discussion following $(8.4)$ in loc. cit. It is indeed proven therein that if the four conjugacy classes of $W_{N_{\rm red}}$, that are implicitly emphasized by the four choices above, are contained in a subgroup of $W_{N_{\rm red}}$ then that subgroup is the full group $W_{N_{\rm red}}$. Notice here that finding a good family $\Theta^{(1)}$ is quite an issue, though its purpose is ``as simple as'' detecting the irreducibility of the $\Q$-polynomial we investigate (namely the reduced $L$-function of a quadratic twist of the elliptic curve $E$). We refer the reader to~\cite[Sections 2 and 3]{J} for details and explanations regarding this particular feature and to~\cite[end of Section 7]{Ka} for the trick of looking both at reductions of the type ($\F_\ell$-irreducible of degree $2$)($\F_\ell$-irreducible of degree $N_{\rm red}-2$) and ($\F_\ell$-irreducible of degree $N_{\rm red}/2$)(a different $\F_\ell$-irreducible of degree $N_{\rm red}/2$) to detect $\Q$-irreducibility.
  \par
  As explained earlier, we need uniform lower bounds (in terms of the degree $N$) for each constant $H$ corresponding to our different choices of families $\Theta$. Those bounds are essentially provided by the following Lemma:

  \begin{lemme}\label{estim-theta}
  Suppose $N\geqslant 5$ . For the first three families of sieving sets above, the following estimates hold for $\ell\geqslant\ell_0(N)$:
 $$ 
  \frac{|\Theta_\ell^{(1)}|}{|\Omega(N,\F_\ell)|}\geqslant \frac{1}{4N^2}\,,\indent
  \frac{|\Theta_\ell^{(2)}|}{|\Omega(N,\F_\ell)|}\geqslant \frac{1}{5N}\,,\indent
  \frac{|\Theta_\ell^{(3)}|}{|\Omega(N,\F_\ell)|}\geqslant \frac{7}{3N}\,.
 $$
For the family fo sieving sets $(\Theta_\ell^{(4)})$, we have, for $\ell\geqslant \ell_0(N)$,
$$
\frac{|\Theta_\ell^{(4)}|}{|\Omega(N,\F_\ell)|}\geqslant \frac{1}{9N(N-6)}\,\text{ if } N_{\rm red}\geqslant 10,\text{ and } \frac{|\Theta_\ell^{(4)}|}{|\Omega(N,\F_\ell)|}\geqslant \frac{1}{N^2},\text{otherwise}\,.
$$
  \end{lemme}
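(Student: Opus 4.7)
The plan is to bound each ratio $|\Theta_\ell^{(i)}|/|\Omega(N,\F_\ell)|$ in two stages: first, reduce the count in $O(N,\F_\ell)$ to a count of polynomials in $M_{N_{\rm red},\ell}$ by estimating, for each fixed admissible $f$, the number of matrices in the relevant left coset $\omega_\ell\Omega(N,\F_\ell)$ whose reduced characteristic polynomial equals $f$; second, count polynomials of the prescribed factorization type in $M_{N_{\rm red},\ell}$.

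For the first stage, we apply the machinery developed in \cite[Sections 3--4]{J} (itself modelled on Chavdarov's approach for the symplectic case), which produces, for each separable palindromic polynomial $f$ of appropriate reduced degree and discriminant, an explicit count of $\omega_\ell\Omega(N,\F_\ell)$ elements mapping to $f$ under the reduced characteristic polynomial map. The key point is that the coset through which $\omega_\ell$ lies only affects this count up to a bounded multiplicative factor (coming from $[O(N,\F_\ell):\Omega(N,\F_\ell)]=4$). Unlike the setup in \cite{J}, we must allow the two new cosets where $\det=-1$, but in each of these the analysis is strictly parallel: the reduced characteristic polynomial now has degree $N-2$ (when $N$ is even) so the prescribed factorization types in the definitions of $\tilde{\Theta}_\ell^{(i)}$ are formulated in $M_{N_{\rm red},\ell}$ precisely to accommodate all four cosets uniformly. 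In all cases the resulting density of matrices with fixed $f$ inside the coset is $\asymp |\Omega(N,\F_\ell)|\,\ell^{-N_{\rm red}/2}$, independent (up to constants) of $f$ provided $\ell\geqslant \ell_0(N)$.

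For the second stage we need uniform lower bounds on the cardinalities of the $\tilde{\Theta}_\ell^{(i)}$. Each of these sets is cut out by a factorization pattern in $\F_\ell[T]$, together with the two mild conditions of fixed value modulo squares at $-1$ and fixed discriminant (for $(2)$--$(4)$, and in the relevant even case of $(1)$). Under the palindromic change of variable $f(x)=x^nh(x+x^{-1})$, the set $M_{N_{\rm red},\ell}$ is in bijection (up to the sign $b_N$) with the set of monic polynomials $h\in\F_\ell[y]$ of degree $\lfloor N_{\rm red}/2\rfloor$, and the factorization types for $h$ corresponding to those required for $f$ are standard: irreducible of full degree for $\tilde{\Theta}_\ell^{(1)}$; the appropriate product of irreducibles for $\tilde{\Theta}_\ell^{(2)}$; separable with a prime-degree factor $>N_{\rm red}/4$ for $\tilde{\Theta}_\ell^{(3)}$; and separable with a unique even-degree factor, necessarily quadratic, for $\tilde{\Theta}_\ell^{(4)}$. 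The number of such $h$ is controlled by the function-field prime-counting estimates of \cite[Lemmas 12--19]{J}: the $1/N$-type terms come from the usual Chebotarev-like $1/\deg$ density of irreducibles, the $1/N^2$ in case $(1)$ from requiring a single irreducible factor of degree $N$ while the two auxiliary constraints each cost a factor $\asymp 1/2$, and the $7/(3N)$ in case $(3)$ from summing $1/p$ over primes $p\in (N_{\rm red}/4,N_{\rm red}/2]$ via Mertens together with Chebyshev's estimate on $\pi(x)$. Imposing the $\disc(f)=\disc(Q)$ and fixed-value-mod-squares conditions each multiplies by an absolute factor; the final constants $1/4$, $1/5$, $7/3$, $1/9$ absorb all such losses.

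The main obstacle lies in this second stage: keeping the implicit constants fully explicit and uniform in $N$, rather than merely asymptotic, so that the ratios take the precise form claimed. This is delicate for $\tilde{\Theta}_\ell^{(3)}$ where one must combine the asymptotic $\sum_{p\leqslant x}1/p=\log\log x+M+O(1/\log x)$ with effective prime-counting bounds, and for $\tilde{\Theta}_\ell^{(4)}$ where the count of separable polynomials with a unique even-degree factor of degree $2$ requires an inclusion--exclusion over the other possible even-degree factorizations. The thresholds $N\geqslant 5$, $N_{\rm red}\geqslant 10$ and $\ell\geqslant\ell_0(N)$ are what guarantee that the error terms coming from coincidences in factorization patterns (powers of the same irreducible factor, the involution $y\mapsto -y$ used in the palindromic parametrization, etc.) are absorbed into these absolute constants. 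The strategy for each $(i)$ is in the end identical to \cite[Lemma 20]{J}; the added ingredient here is only the bookkeeping across the two additional left cosets where $\det=-1$, which costs at worst a factor of $2$ and is already built into the stated constants.
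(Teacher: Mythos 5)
Your overall two-stage outline (reduce to a polynomial count using Chavdarov/Hall-type bounds on the number of isometries with a prescribed reduced characteristic polynomial, then count polynomials by factorization pattern, all via \cite[Lemma 20]{J}) is the same architecture the paper uses, and you correctly identify the role of the palindromic substitution $f=x^nh(x+x^{-1})$ and the reason the sets $\tilde{\Theta}_\ell^{(i)}$ carry the conditions $\disc(f)=\disc(Q)$ and the fixed value of $f(-1)$ mod squares. However, there are two genuine gaps.

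First, your account of $\tilde{\Theta}^{(1)}_\ell$ is incorrect, and this is exactly where the constant $1/(4N^2)$ comes from. You describe $\tilde{\Theta}^{(1)}_\ell$ as ``irreducible of full degree'' and attribute the $1/N^2$ to ``requiring a single irreducible factor of degree $N$ while the two auxiliary constraints each cost a factor $\asymp 1/2$''; that reasoning would produce a density of order $1/N$, not $1/N^2$. In fact $\tilde{\Theta}^{(1)}_\ell$ must be defined piecewise: irreducible $f$ only works when $N$ is odd, or when $N$ is even with $\eps_\ell^{(1)}=-1$, or in the even nonsplit case; in the even split case one cannot realize irreducible $f$ with the correct discriminant, and the set has to be taken as polynomials factoring as a product of two distinct irreducibles of degree $N_{\rm red}/2$ (or a quadratic times an irreducible of degree $N_{\rm red}-2$, depending on $\ell\bmod 4$). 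This is the Katz trick for detecting $\Q$-irreducibility through reducible mod-$\ell$ patterns. It is the two-irreducibles-of-degree-$N_{\rm red}/2$ pattern that has density of order $1/N^2$, and since the final bound is the minimum over the four cases, this is what determines the stated constant. The paper makes this explicit by invoking \cite[Lemma 6.3]{Ka} for the even/$\det=-1$ coset (density $\geqslant 1/(4(N-2))$) and the split estimate from \cite[Lemma 20]{J} (density $\geqslant 1/(4N^2)$), the minimum being $1/(4N^2)$.

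Second, and relatedly, the claim that ``the resulting density of matrices with fixed $f$ inside the coset is $\asymp |\Omega(N,\F_\ell)|\,\ell^{-N_{\rm red}/2}$, independent (up to constants) of $f$'' is not quite right without the discriminant hypothesis, and the discriminant condition is not a ``mild'' density-shaving condition costing an absolute factor: by Baeza's criterion, there is \emph{no} $M\in SO(N,\F_\ell)$ with reduced characteristic polynomial $f$ unless $\disc(f)=\disc(Q)$ mod squares. The equidistribution-over-characteristic-polynomial statement that holds in $Sp(2g,\F_\ell)$ fails in the orthogonal case, which is precisely why the sets $\tilde{\Theta}^{(i)}_\ell$ must be built around the discriminant condition rather than adding it as an afterthought. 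Apart from these points, the remaining cases ($\Theta^{(2)}$, $\Theta^{(3)}$, $\Theta^{(4)}$, including the separate treatment of $N_{\rm red}\leqslant 8$) do follow the asymptotic expansions already recorded in \cite[Proof of Lemma 20]{J} and \cite[Lemma 16]{J} as you indicate.
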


\begin{proof}
 For $\Theta_\ell^{(1)}$ the major part of the argument is contained in the proof of~\cite[Lemma 20]{J}. From the estimates proved in loc. cit., we easily deduce, in the case where $N=N_{\rm red}$ is even, $\eps_\ell^{(1)}=1$ and the orthogonal group $O(N,\F_\ell)$ is nonsplit, the following asymptotic expansion:
 $$
 \frac{|\Theta_\ell^{(1)}|}{|\Omega(N,\F_\ell)|}\geqslant \frac{1}{2N}-\frac{N(N-1)+4N+4}{4N\ell}+O\big(\frac{N^3}{\ell^2}\big)\,,
 $$
 with an absolute implied  constant. Thus
 $$
 \frac{|\Theta_\ell^{(1)}|}{|\Omega(N,\F_\ell)|}\geqslant \frac{1}{3N}\,,
 $$
 for $\ell\geqslant \ell_0(N)$. Moreover, the results of Katz~(\cite[Section 6]{Ka}) provide us with the following lower bounds:
 $$
 \frac{|\Theta_\ell^{(1)}|}{|\Omega(N,\F_\ell)|}\geqslant \frac{1}{4(N-2)}\,,
 $$
as soon as $\ell\geqslant \max(7,N/2-1)$, if $N$ is even and $\eps_\ell^{(1)}=-1$ (this is~\cite[Lemma 6.3]{Ka}). Other lemmas from loc. cit. are also used in~\cite[Lemma 20]{J} to prove
$$
 \frac{|\Theta_\ell^{(1)}|}{|\Omega(N,\F_\ell)|}\geqslant \frac{1}{4N^2}\,,
 $$
 in the split case and assuming $\eps_\ell^{(1)}=1$, and
 $$
 \frac{|\Theta_\ell^{(1)}|}{|\Omega(N,\F_\ell)|}\geqslant \frac{1}{2N-2}\,,
 $$
 as soon as $\ell\geqslant \max(7,(N-1)/2)$ if $N$ is odd. 
 \par
 Overall, we obtain the lower bound stated as soon as $N\geqslant 3$.
 
 \par
 \medskip
  As far as $\Theta_\ell^{(2)}$ is concerned, we deduce from the formula of~\cite[Proof of Lemma 20]{J} that
  $$
  \frac{|\Theta_\ell^{(2)}|}{|\Omega(N,\F_\ell)|}\geqslant \frac{1}{4N}-\frac{3N^2+3N+1}{24N\ell}+O\bigl(\frac{N^3}{\ell^2}\bigr)\,,
  $$
with an absolute implied constant. So for $\ell\geqslant \ell_0(N)$,
$$
\frac{|\Theta_\ell^{(2)}|}{|\Omega(N,\F_\ell)|}\geqslant \frac{1}{5N}\,.
$$

 Finally for $\Theta^{(3)}$ and $\Theta^{(4)}$, loc. cit. provides us on the one hand with:
 $$
 \frac{|\Theta_\ell^{(3)}|}{|\Omega(N,\F_\ell)|}\geqslant \frac{7}{2N}-\frac{7N^2+23N+28}{4N\ell}+O\bigl(\frac{N^3}{\ell^2}\bigr)\,,
 $$
as soon as $N\geqslant 5$ and with an absolute implied constant, and on the other hand, for $N_{\rm red}\geqslant 10$, with
 $$
 \frac{|\Theta_\ell^{(4)}|}{|\Omega(N,\F_\ell)|}\geqslant \frac{1}{8(N-6)}-\frac{5N(N-1)+1}{80(N-6)\ell}+O\bigl(\frac{N^3}{\ell^2}\bigr)\,,
 $$
with an absolute implied constant. The last inequality is obtained by combining the expression for $|\tilde{\Theta}_\ell^{(4)}|$ in~\cite[Proof of Lemma 20]{J} and Lemma $16$ of loc. cit. From the above expressions we deduce:
 $$
 \frac{|\Theta_\ell^{(3)}|}{|\Omega(N,\F_\ell)|}\geqslant \frac{7}{3N}\,,\indent \frac{|\Theta_\ell^{(4)}|}{|\Omega(N,\F_\ell)|}\geqslant \frac{1}{9(N-6)}\,,
 $$
 for $\ell\geqslant \ell_0(N)$, with the second inequality holding if we assume $N_{\rm red}\geqslant 10$. To conclude, we need to handle separately the case where $N_{\rm red}\leqslant 8$ (i.e. $N_{\rm red}\in\{4,6,8\}$). To that end we use once more the corresponding formul\ae\, contained in the proof of~\cite[Lemma 20]{J} as well as~\cite[Lemma 16]{J}. It is then straightforward to derive, for $\ell\geqslant \ell_0(N)$,
$$
|\Theta_\ell^{(4)}||\Omega(N,\F_\ell)|^{-1}\geqslant \frac{1}{N^2}\,,
$$
from these references.
\end{proof}

 \begin{remarks}
  $(i)$ From the lower bound for $|\Theta_\ell^{(1)}||\Omega(N,\F_\ell)|^{-1}$ in Lemma~\ref{estim-theta}, we deduce a new proof of $(2)$ of~\cite[Lemma 5.1]{KoTwist}. In loc. cit. the author suggests that such a result should follow from an adaptation to the orthogonal case of the method used by Chavdarov in~\cite[Th. 3.5 attributed to Borel]{Ch} to prove that the matrices in $Sp(2g,\F_\ell)$ are equidistributed in the subsets containing matrices having a common characteristic polynomial. As explained in~\cite[Section 2]{J}, such an adaptation is not straightforward at all (e.g. the analog equidistribution statement does not hold in full generality) because of the many complications due to the lack of good topological properties of the orthogonal group as an algebraic group.
  \par $(ii)$ To get a deeper understanding of the issue discussed in $(i)$, we easily see that the crucial question is: given a quadratic (resp. symplectic) $2n$-dimensional space $(V,Q)$ over $\F_\ell$ and a polynomial $f$ satisfying~(\ref{eq_fonc}) (resp. a self reciprocal $f$), is it true that there is an $M\in O(2n,\F_\ell)$ (resp. $M\in Sp(2n,\F_\ell)$) such that $\det(1-TM)=f$?
  \par
  For simplicity we assume that we always choose a separable $f$ and, in the orthogonal case, that $f$ is monic. In that case we are looking for a matrix in $SO(2n,\F_\ell)$. The strategy used by Chavdarov is to consider this question as a rationality problem: does the set of $\overline{\F_\ell}$-points in ${\bf SO}(2n)$ (resp. ${\bf Sp}(2n)$) with common characteristic polynomial $f$ contain an $\F_\ell$-point? Chavdarov considers the centralizer $C_{\bar{M}}$ under $Sp(2n,\overline{\F_\ell})$ of an element $\bar{M}\in Sp(2n,\overline{\F_\ell})$ with characteristic polynomial $f$. Because ${\bf Sp}(2n)$ is simply connected $C_{\bar{M}}$ is connected and thus is equipped with Lang's isogeny. This observation yields an $\F_\ell$-point with characteristic polynomial $f$ in the symplectic case. As ${\bf SO}(2n)$ is not simply connected that method cannot be directly applied in the orthogonal case. Indeed a result of Steinberg (see~\cite[II Cor. 4.4]{SpSt}) asserts that non connected centralizers will occur as soon as the group acting is not simply connected. Moreover we cannot ``lift'' that method to the simply connected cover ${\bf Spin}(2n)$ of ${\bf SO}(2n)$ because the possibilty to produce an element in ${\rm Spin}(2n,\F_\ell)$ from an isometry in $SO(2n,\F_\ell)$ depends on the spinor norm of the isometry (see~\cite[Rem. 6.1]{Ka}).
  \par
  The spinor norm turns out to be the right invariant that enables us to answer the question in the orthogonal case. First a result of Zassenhaus (see~\cite[(2.1) and p. 446]{Za}) asserts that the spinor of an isometry with characteristic polynomial $f$ has spinor norm $f(-1)$ modulo squares (provided $-1$ is not a root of $f$). Second, an easy computation yields $\disc(f)=(-1)^nf(1)f(-1)$ (see~\cite[Th. 1 and Th. 2]{E}). Finally, a result of Baeza (see~\cite[prop. 3.6 and Th. 3.7]{Ba} and also~\cite[prop. A.3]{GM}) asserts that an $M\in SO(2n,\F_\ell)$ with prescribed characteristic polynomial $f$ exists if and only if $\disc(Q)=\disc(f)$ (modulo squares).
  \par
  The above condition on equality of discriminants can be restated in the framework of Rodriguez-Villegas' unpublished note~\cite{R}: indeed in loc. cit., the author gives a very general and intrinseque construction that relates to the present question. If we restrict to the case of finite fields, that construction implies the following: let $\eps=\pm 1$ and consider two coprime polynomials $f,g\in\F_\ell[T]$ with degree $2n$ such that $f$ is self reciprocal and $T^{2n}g(T^{-1})=-\eps g(T)$. Then one can produce a $2n$-dimensional quadratic (resp. symplectic) space if $\eps=1$ (resp. $\eps=-1$) and two elements $A,B$ in $O(2n,\F_\ell)$ (resp. $Sp(2n,\F_\ell)$) with respective characteristic polynomials $f$ and $g$. Rodriguez-Villegas shows that if $\eps=-1$ and given a polynomial $f$ as above, we can always produce a suitable polynomial $g$, reproving in turn Chavdarov's result~\cite[Lemma 3.4]{Ch}. Now if $\eps=1$ the quadratic space we end up with has discriminant equal to the resultant ${\rm Res}(f,g)$ of $f$ and $g$. Combining that with the facts mentioned above we deduce that given $f$, finding a suitable $g$ is equivalent to solving  the equation ${\rm Res}(f,g)=\disc(f)$ for $g$.

 \par $(iii)$ The smallest prime $\ell_0(N)$ for which the estimates of Lemma~\ref{estim-theta} hold can be made explicit. Indeed, exploiting the asymptotic expansions we used in the proof, we deduce by inspection that all the inequalities hold as soon as $\ell\geqslant 5N^2$. %However when putting things together in order to prove Theorem~\ref{estim_petitgalois}, we will see that the implied constant can be chosen in such a way that the above restriction on $\ell$ does not have any consequence on the range of validity of the final result
 \par $(iv)$ It is very likely (though the author could not come up with a satisfactory statement covering all the different cases) that a clever adaptation and/or combination of Katz's computations~\cite[Lemmas 6.2 through 6.6]{Ka} would yield similar estimates as those given in Lemma~\ref{estim-theta}. Note that, in performing such a computation, it might be more convenient to use the criterion~\cite[Lemma 7.1\emph{(ii)}]{KoCrelle} rather than~\cite[Lemma 7.1\emph{(iii)}]{KoCrelle} for a subgroup of $W_{N_{\rm red}}$ to be equal to the full group.
 \end{remarks}

 To each of the families $\Theta^{(i)}$ of Lemma~\ref{estim-theta}, where $1\leqslant i\leqslant 4$, corresponds a conjugacy class, say $\Theta^{\sharp(i)}$, of $W_{N_{\rm red}}$. So with notation as in Theorem~\ref{estim_petitgalois}, we have
 $$
 |\{c\mid L_{{\rm red},c} \text{ has small Galois group }\}|\leqslant \sum_{1\leqslant i\leqslant 4}{|\{c\mid \Gal(L_{{\rm red},c}/\Q)\cap\Theta^{\sharp(i)}=\emptyset\}|}\,,
 $$
 where the parameter $c$ runs over the elements of $\A^1(\F_q)$ such that $f(t)=(c-t)g(t)\in F_d(\F_q)$. The explanations following~\cite[(8.4)]{KoCrelle} and justifying our choice of families $\Theta^{(i)}$ enable us to deduce:
 $$
 |\{c\mid L_{{\rm red},c} \text{ has small Galois group }\}|\leqslant \sum_{1\leqslant i\leqslant 4}{|\{u\in U_g(\F_q)\mid \rho_\ell(\Fr_u)\not\in\Theta_\ell^{(i)}\text{ for } \ell_0(E)\leqslant\ell\leqslant L\}|}\,,
 $$
 where $\ell_0(E)$ is chosen in such a way that both Lemma~\ref{lemme-chris} and Lemma~\ref{estim-theta} hold and $L\geqslant \ell_0(E)$ is a fixed parameter (in particular, because of point $(ii)$ of the above remark, we can use the inequalities of Lemma~\ref{estim-theta} provided $L\geqslant 5N^2$).
 
 \par
 \medskip
 Set
 $$
 \Lambda=\{\ell\text{ prime}\mid \ell_0(E)\leqslant \ell\leqslant L\}\,.
 $$
 
   Applying Corollary~\ref{up-bound-delta}, Lemma~\ref{estim-theta} and the Prime Number Theorem, we get
  $$
  |\{c\mid L_{{\rm red},c} \text{ has small Galois group }\}|\ll N^2|G(V_g/U_g)| L^{-1}\log L (q+Cq^{1/2}(L+1)^A)\,,
  $$
  with an implied constant depending only on $j(E)$.

  \begin{remark}
   Note that, when applying the Prime Number Theorem, a natural constraint is imposed on $L$: since we only consider primes greater than $5N^2$, we need to choose (roughly) $L\gg N^2\log(N)$ (see~\cite[end of Section 8]{KoCrelle}). As explained in loc. cit. this condition can in fact be removed by modifying appropriately the implied constant in the above inequality. Indeed, that inequality becomes trivial as soon as $N^2\geqslant L(\log L)^{-1}$. 
  
  \end{remark}

  \par
  Setting $L=q^{1/2A}$ we deduce
 $$
 |\{c\mid L_{{\rm red},c} \text{ has small Galois group }\}|\ll N^2 |G(V_g/U_g)|q^{1-1/2A}\log q\,,
 $$
 with an implied constant depending only on $j(E)$.
 
 \par
 \medskip
  Looking back at the definition of $A$ (see Corollary~\ref{up-bound-delta}), we see that 
  $$
  2A=\frac{7}{2}N^2-\frac{7}{2}N+2\,,
  $$
  so that the proof of Theorem~\ref{estim_petitgalois} is complete.

\begin{remark}
 Note that we need to assume that the common degree $d$ of the twisting polynomials is large enough in order to apply Corollary~\ref{up-bound-delta}. The degree $N$ of the $L$-functions we study is related to $d$ (sometimes in a very explicit way, see the following section), as~(\ref{N-M-A}) shows quite obviously. As a consequence, we need to have $L=q^{1/2A}$ big enough (i.e. greater than the parameter $\ell_0(E)$ of Lemma~\ref{lemme-chris}) and simultaneously $d$ (and thus $N$) big enough. This is of course always possible if $q$ is big enough and this is why we assume $q\geqslant q_0(E)$ in the statement of Theorem~\ref{quant-katz} and Theorem~\ref{estim_petitgalois}. This might be a bit unsatisfactory as the smallest suitable value of $q$ is not clear, however we can also usefully notice that the estimate of Theorem~\ref{estim_petitgalois} is trivial for small values of $q$ (say if $N^2|G(V_g/U_g)|\gg q^{1/2A}\log(q)^{-1}$). In the next subsection though, we see that in the case where $E$ is a Legendre curve, a much more precise answer to that question arises from the resolution of the uniformity issue.
\end{remark}

\subsection{Uniformity in the case of a Legendre curve} 

 In this last subsection we focus on the case where $E/K$ is a Legendre curve. Keeping the notation $K=\F_q(t)$ we let $E$ be the elliptic curve over $K$ defined as the projective compactification of the affine curve given by
$$
y^2=x(x-1)(x-t)\,.
$$
 
The affine variety $F_d$ can be seen in that case as the set of polynomials with degree $d$ in $\overline{\F_q}[t]$ that are coprime to $t(t-1)$. Lemma $6.7$ of~\cite{H} provides us on the one hand with the dimension $N$ of the space $\mt{V}_{f,\ell}$:
\begin{equation} \label{dimV}
N=\begin{cases}
&2d\text{ if } d\text{ is even}\,,\\
&2d-1 \text{ if } d\text{ is odd}\,,
  \end{cases}
\end{equation}
and on the other hand with the fact that Lemma~\ref{lemme-chris} holds for any $d\geqslant 2$ and any $\ell\geqslant 5$. So if we fix a polynomial $g\in F_{d-1}$ and a Galois cover $V_g\ra U_g$ satisfying the hypotheses of Lemma~\ref{lemme-chris}, we can state the following result which can be seen as a uniform quantitative version of Katz's Theorem in the special case (stated in the introduction) of twists of Legendre curves:

\begin{theorem} \label{petit-galois-legendre} 
Let $E/K$ be the Legendre curve defined as above. With the same notation as in Theorem~\ref{estim_petitgalois} we have, for any $d\geqslant 3$ and any power $q$ of $p$,
 $$
|\{c\in \A^1(\F_q)\mid g(c)\not=0, c\not=0,1 \text{ and }  L_{{\rm red},c}\text{ has small Galois group }\}|\ll d^2|G(V_g/U_g)|q^{1-\gamma}\log q\,,
 $$
 with an absolute implied constant and where we can choose $2\gamma^{-1}=7N^2-7N+4$ (the link between $N$ and $d$ being given by~(\ref{dimV})).
\end{theorem}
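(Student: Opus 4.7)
The plan is to rerun the argument behind Theorem~\ref{estim_petitgalois} while carefully tracking every point at which $E$ enters the implicit constants, and to exploit the fact that the Legendre family admits uniform control over all these quantities. The structural observation is that in the Legendre case, Hall's explicit Lemma~$6.7$ of~\cite{H} allows us to take $d_0(E)=2$ and $\ell_0(E)=5$ in Lemma~\ref{lemme-chris}: both quantities become absolute. Consequently the sieve index set can uniformly be chosen as
\[
\Lambda=\{\ell\text{ prime}\mid 5\leqslant\ell\leqslant L\}
\]
for every $d\geqslant 3$ and every power $q$ of $p$.

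Next I would apply Corollary~\ref{up-bound-delta}(2) to the compatible system $(\mt{T}_{d,\ell})_\ell$ restricted to $U_g$, together with the four families $\Theta^{(i)}$ of Lemma~\ref{estim-theta}. The new point here is that the constant $C$ appearing in Corollary~\ref{up-bound-delta}(2) must now be shown to depend only on $d$ (and not on the underlying elliptic curve) since $E$ is fixed to be the Legendre curve. This is where the explicit nature of the family matters: $\overline{U_g}$ is $\A^1_{\overline{\F_q}}$ minus $\{0,1\}$ and the zero locus of $g$, so its compactly supported Euler-Poincar\'e characteristic is bounded linearly in $d$, and the ramification of $\mt{T}_{d,\ell}$ on $\overline{U_g}$ is controlled uniformly in terms of $d$, with absolute constants coming from the explicit description of the Legendre pencil. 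Feeding these bounds into~\cite[Prop.~8.9]{KoLS} produces a $C$ that is polynomial in $d$, which can be absorbed into the prefactor $d^2$ after a harmless adjustment of the absolute implied constant.

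The remainder of the argument then follows verbatim from the proof of Theorem~\ref{estim_petitgalois}: one lower-bounds $H$ using Lemma~\ref{estim-theta} and the Prime Number Theorem, optimizes the sieve cutoff with $L=q^{1/(2A)}$ where $2A=\tfrac{7}{2}N^2-\tfrac{7}{2}N+2$, and uses~(\ref{dimV}) to replace $N^2$ by $O(d^2)$ with absolute implied constant. Uniformity in $q$ is handled by a triviality argument at the boundary: whenever $q$ is so small that $L=q^{1/(2A)}$ fails to exceed $5$, or equivalently the claimed bound $d^2|G(V_g/U_g)|q^{1-\gamma}\log q$ already exceeds the total count $q$ of rational points on $\A^1(\F_q)$, the inequality of Theorem~\ref{petit-galois-legendre} holds trivially; for larger $q$ the sieve estimate above delivers the claim. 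This disposes of the unspecified condition $q\geqslant q_0(E)$ that was left over in Theorem~\ref{estim_petitgalois}.

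The main obstacle, and the genuine content of this uniform refinement, is the polynomial-in-$d$ control of the constant $C$ in Corollary~\ref{up-bound-delta}(2) for the Legendre family. This requires a careful reading of~\cite[Prop.~8.9]{KoLS}, whose explicit bounds are phrased in terms of the rank of the input sheaves and the number and conductor of their singular points. For the Legendre family all of these quantities admit explicit polynomial bounds in $d$ that do not involve the coefficients of $g$, and this is precisely what makes it possible to trade the $j(E)$-dependent constant of Theorem~\ref{estim_petitgalois} for an absolute constant.
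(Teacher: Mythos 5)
Your proposal follows the same route as the paper's proof, namely specialize Theorem~\ref{estim_petitgalois} to the Legendre pencil, use Hall's Lemma~$6.7$ of~\cite{H} to make the thresholds $d_0(E)$ and $\ell_0(E)$ absolute (you correctly identify $d_0=2$, $\ell_0=5$), invoke~(\ref{dimV}) to replace $N^2$ by $O(d^2)$, and dispose of the residual condition $q\geqslant q_0(E)$ by noting that the claimed bound is trivially true when $q$ is small (this is exactly the remark the paper makes just before the uniformity subsection). However, there is one step you should not trust as written: you claim that the constant $C$ of Corollary~\ref{up-bound-delta}(2) is ``polynomial in $d$'' and can be ``absorbed into the prefactor $d^2$ after a harmless adjustment of the absolute implied constant.'' If $C$ genuinely grows like a positive power of $d$, it cannot be absorbed into the factor $d^2$ without changing the exponent, and the statement would then fail to be uniform in $d$. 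The paper avoids this difficulty entirely by not re-opening the sieve computation: Theorem~\ref{estim_petitgalois} already asserts that the implied constant depends only on $j(E)$ (and in particular not on $N$), so once $E$ is fixed to be the Legendre curve the constant is automatically absolute and the only work left is the uniform Hall monodromy input, the dimension formula~(\ref{dimV}), and the observation about the $j$-invariant denominator (which feeds into the formula for $\ell_0(E)$ in Lemma~\ref{lemme-chris}). Your argument would be tightened and shortened considerably by simply citing the $j(E)$-only dependence of the implied constant in Theorem~\ref{estim_petitgalois}, rather than attempting to re-extract and re-absorb $C$.
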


\begin{proof}
 The statement follows directly from the combination of Theorem~\ref{estim_petitgalois}, Lemma $6.7$ in~\cite{H} and the fact that the denominator of the $j$-invaraint of the Legendre curve $E$ does not have any irreducible factor appearing with multiplicity greater than $2$. 
\end{proof}

\end{document}